\documentclass[a4paper]{amsart}

\usepackage[english]{babel}
\usepackage{graphicx}        
   \usepackage{url}                          

\makeindex             

\newcommand{\C}{\mathbb{C}}
\newcommand{\R}{\mathbb{R}}

\newcommand{\N}{\mathbb{N}}

\newcommand{\de}{\partial}
\newcommand{\dbar}{\overline{\partial}}
\newcommand{\debar}{\overline{\partial}}

\newcommand{\levinull}{\mathcal{N}}
\newcommand{\distr}{\mathcal{D}}
\newcommand{\core}{\mathfrak{C}}

\newcommand{\OO}{\mathcal{O}}
\newcommand{\ideal}{\mathcal{I}}
\newcommand{\alg}{\mathcal{A}}
\newcommand{\smooth}{\mathcal{C}}
\newcommand{\modR}{\mathcal{R}}

\newtheorem{theorem}{Theorem}[section]
\newtheorem{proposition}[theorem]{Proposition}
\newtheorem{lemma}[theorem]{Lemma}

\newtheorem{definition}[theorem]{Definition}
\newtheorem{ex}[theorem]{Example}
\newtheorem{remark}[theorem]{Remark}

	\title{Remarks on the Levi core}

	
\author{Gian Maria Dall'Ara}

	\address{Istituto Nazionale di Alta Matematica ``F. Severi" \\ Research Unit Scuola Normale Superiore \\ Piazza dei Cavalieri, 7, 56126, Pisa (Italy)} 
	\email{dallara@altamatematica.it}
\author{Samuele Mongodi}
\address{Dipartimento di Matematica  e Applicazioni\\ Università degli Studi di Milano-Bicocca\\ Via Roberto Cozzi, 55, I-20125, Milano (Italy)} \email{samuele.mongodi@unimib.it}

\begin{document}

\begin{abstract}We investigate a few aspects of the notion of Levi core, recently introduced by the authors in \cite{Levicore}: a basic finiteness question, the connection with Kohn's algorithm, and with Catlin's property (P). \end{abstract}

	\maketitle

	\section{Introduction}
	
This note complements the recent paper "The core of the Levi distribution" \cite{Levicore}, where the authors introduced a new geometric invariant associated to CR manifolds of hypersurface type, called the Levi core. We refer to the paper for motivations and applications to the regularity theory of the $\bar\partial$-Neumann problem. Here we want to address certain basic questions that arise quite naturally when considering the Levi core, and indeed have been raised by several people in private communications to the authors. 

The Levi core is a special case of a more general construction that attaches to any distribution $\mathcal{D}$ of subspaces of the tangent bundle of a manifold a smaller distribution $\mathfrak{C}(\mathcal{D})$, its core. The core is defined starting with $\mathcal{D}$ and iterating a "derived distribution" construction until it eventually stabilizes. This works in great generality, namely for any smooth manifold and any distribution, under no regularity assumption beyond the basic requirement that $\mathcal{D}$ be a closed subset of the tangent bundle. The price to be paid for that generality is that the stabilization may require a (countably) infinite number of iterations, a fact that is in the nature of things, as the notion of derived distribution collapses in the one-dimensional case to that of derived set (and this is indeed the origin of the terminology) in set theory. We refer again to \cite{Levicore} for a more detailed discussion. 

The transfinite nature of the core makes it difficult to extract some usable geometric structure from it. Since such a structure would be highly desirable in applications, for example in the case of the Levi core in connection with the $\bar\partial$-Neumann problem, a couple of basic natural questions arise: is the core $\mathfrak{C}(\mathcal{D})$ "reached" after finitely many iterations, under the assumption that the underlying manifold and the distribution $\mathcal{D}$ lie in an appropriate category smaller than $\mathcal{C}^\infty$? In that case, do the core and its support lie in the same category? After reviewing the terminology and definitions from \cite{Levicore} in Section 2, in Section 3 we observe that this is indeed the case in the complex algebraic and complex analytic categories, and also in the real analytic category under an additional coherence assumption. We also point out the difficulties that present themselves if one drops the coherence hypothesis, or ventures beyond the analytic category. 

The algebraic outlook of Section 3 leads then to the consideration of the relation between the notion of Levi core (that is, the core of the Levi null distribution of a CR manifold) and the well-known Kohn's algorithm. In Section 4, we  discuss this relation, the theme being that the two are indeed manifestations of the same idea, at least when restricting to nice enough categories. 

In Section 5, we discuss a couple of specializations of the notion of core to complex analysis, other than the Levi core. 

Finally, in Section 6 we show that there are bounded smooth pseudoconvex domains satisfying Catlin's property (P) and having nontrivial Levi core. This theme has been explored in more depth by Treuer \cite{Treuer}. 

\subsection*{Acknowledgments:} The authors would like to thank François Berteloot for drawing their attention on the notion of B-regularity in relation with the core and Francesca Acquistapace and Fabrizio Broglia for fruitful discussions on analytic and Carlemann functions.

	\section{Basic definitions}
	
	Let $M$ be a smooth manifold. We begin by recalling some terminology and definitions from \cite{Levicore}.
	
	A \textbf{real distribution} on $M$ is a subset $\distr$ of the tangent bundle $TM$ such that the fiber $\distr_p:=\distr\cap T_pM$ is a vector subspace of $T_pM$ for every $p\in M$. A \textbf{complex distribution} on $M$ is a subset $\distr$ of the complexified tangent bundle $\C TM$ such that $\distr_p:=\distr\cap \C T_pM$ is a complex vector subspace of $\C T_pM$ for every $p\in M$.
	
	The \textbf{support} $S_{\distr}$ of a distribution $\distr$ is the set of points $p\in M$ such that $\distr_p\neq \{0\}$. We call a distribution \textbf{closed} if it is a closed subset of $TM$ (or $\C TM$); for a closed distribution the function $p\mapsto \dim\distr_p$ is upper semicontinuous, and in particular the support $S_{\distr}$ is closed.
	
	Closed distributions naturally occur as common kernels of subsets 	$\mathfrak{R}$ of $\Omega^1(M)$, i.e. of smooth differential $1$-forms. More precisely, given such an $\mathfrak{R}$ one defines \[\distr_p=\{X_p\in T_pM\ :\ \langle \omega_{p}, X_p\rangle=0\quad \forall \omega\in \mathfrak{R} \},\]
	and $\distr=\cup_p\distr_p=:\ker \mathfrak{R}$. There is no loss of generality in assuming that $\mathfrak{R}$ is a $C^\infty(M)$-submodule of $\Omega^1(M)$. 
	
	As a special case of the above construction, given a set $S\subseteq M$, consider the associated ideal of smooth functions 
	\[\mathcal{I}(S)=\{f\in\mathcal{C}^\infty(M)\ :\ f\vert_S\equiv 0\}\] and the $C^\infty(M)$-module of $1$-forms $\mathfrak{R}(S)$ generated by $\{df\, :\ f\in\mathcal{I}(S)\}$. We call the associated distribution $\ker \mathfrak{R}(S)$ the \textbf{tangent distribution} to $S$, and we denote it by $TS$ (cf. \cite[Def. 2.5]{Levicore}). Notice that, according to this definition, $TS$ is a subset of $TM$ with zero fiber at every point outside the closure of $S$ (and also at every isolated point of $S$). 
	
	We now recall from \cite{Levicore} the two definitions that are most important for our discussion. 
	
	\begin{definition}[\textbf{Derived distribution}]\label{derived_distribution} Let $M$ be a smooth manifold and let $\distr\subseteq TM$ be a real distribution. The distribution \begin{equation*}
			\distr':=\distr\cap TS_{\distr}
		\end{equation*} will be called the \textbf{derived distribution} of $\distr$. Analogously, if $\distr\subseteq \C TM$ is a complex distribution on $M$, its derived distribution is defined as \begin{equation*}
			\distr':=\distr\cap \C TS_{\distr}.
		\end{equation*}
		Here $\C TS_{\distr}$ is obtained complexifying pointwise $TS_{\distr}$. 
	\end{definition}

A distribution that equals its derived distribution is said to be \textbf{perfect}. 
	
	It is possible to iterate the operation of "taking the derived distribution". Given an ordinal $\alpha$, we define $\distr^{(\alpha)}$ by transfinite recursion (see \cite{Ciesielski}, Theorem 4.3.1):\begin{enumerate}
		\item if $\alpha=0$, we set $\distr^{(0)}:=\distr$;
		\item if $\alpha+1$ is a successor ordinal, we set $\distr^{(\alpha+1)}:=(\distr^{(\alpha)})'$;
		\item if $\alpha$ is a limit ordinal, we set $\distr^{(\alpha)}:=\bigcap_{\beta<\alpha}\distr^{(\beta)}$.
	\end{enumerate}
	If $\distr$ is closed, then every $\distr^{(\alpha)}$ is closed (this is easily proved by transfinite induction) and we have the following Cantor--Bendixson-type theorem (cf. \cite[Thm. 2.9]{Levicore}). 
	
	\begin{theorem}\label{Cantor_Bendixson}
		If $\distr$ is a closed (real or complex) distribution, then there exists a countable ordinal $\alpha$ such that $\distr^{(\alpha)}$ is perfect.
	\end{theorem}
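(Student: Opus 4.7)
The plan is to run a transfinite Cantor--Bendixson-style argument, using second countability of $TM$ (respectively $\C TM$) to force stabilization before $\omega_1$. I treat the real case; the complex case is identical after complexifying pointwise.

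First I would verify, by transfinite induction, two preliminary facts: that the chain $\distr^{(\alpha)}$ is decreasing in $\alpha$, and that each $\distr^{(\alpha)}$ is a closed subset of $TM$. The decreasing property reduces to $\distr' \subseteq \distr$ at successor steps (and intersections preserve inclusion at limits). For closedness at successor steps, note that if $\distr^{(\alpha)}$ is closed then its support $S_{\distr^{(\alpha)}}$ is closed, hence $TS_{\distr^{(\alpha)}}$ is closed (it is cut out by the differentials of the ideal of a closed set, as in the definition of tangent distribution), and $\distr^{(\alpha+1)} = \distr^{(\alpha)} \cap TS_{\distr^{(\alpha)}}$ is an intersection of closed sets; at limit ordinals the claim is immediate.

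Next I would argue by contradiction: suppose that no countable ordinal $\alpha$ satisfies $\distr^{(\alpha)} = \distr^{(\alpha+1)}$. Then for every $\alpha < \omega_1$ the inclusion $\distr^{(\alpha+1)} \subsetneq \distr^{(\alpha)}$ is strict, so I can pick $v_\alpha \in \distr^{(\alpha)} \setminus \distr^{(\alpha+1)}$, and since $\distr^{(\alpha+1)}$ is closed in $TM$ there is an open neighborhood $U_\alpha \subseteq TM$ of $v_\alpha$ disjoint from $\distr^{(\alpha+1)}$. The manifold $M$ is second countable, hence so is $TM$; fix a countable basis $\{B_n\}_{n\in\N}$ of $TM$ and choose, for each $\alpha < \omega_1$, an index $n(\alpha) \in \N$ with $v_\alpha \in B_{n(\alpha)} \subseteq U_\alpha$. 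By the pigeonhole principle there exist $\alpha < \beta < \omega_1$ with $n(\alpha) = n(\beta)$. Then $v_\beta \in B_{n(\alpha)} \subseteq U_\alpha$, while at the same time $v_\beta \in \distr^{(\beta)} \subseteq \distr^{(\alpha+1)}$, contradicting $U_\alpha \cap \distr^{(\alpha+1)} = \emptyset$. This yields a countable $\alpha$ with $\distr^{(\alpha)}$ perfect.

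There is no genuinely hard step here; the only point that requires care is the transfinite verification of closedness, because it is what makes the "choose an open neighborhood disjoint from the next stage" step available at every countable ordinal. Everything else is a standard pigeonhole on a countable basis, exactly as in the classical Cantor--Bendixson theorem for derived sets, of which this is a direct generalization.
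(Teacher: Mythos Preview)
The paper does not actually prove this theorem here; it merely quotes it as \cite[Thm.~2.9]{Levicore} and remarks in passing that closedness of every $\distr^{(\alpha)}$ ``is easily proved by transfinite induction.'' Your argument is correct and is exactly the standard Cantor--Bendixson argument one expects (and presumably the one in the cited reference): transfinite induction for closedness, then a pigeonhole on a countable basis of $TM$ to force stabilization before $\omega_1$. The only implicit assumption you are using is that $M$, hence $TM$, is second countable, which is part of the standing convention for smooth manifolds in this paper.
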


Thanks to Theorem \ref{Cantor_Bendixson}, we can formulate our second key definition. 

	\begin{definition}[\textbf{Core of a distribution}]\label{core_dfn}
		Let $\distr$ be a (real or complex) closed distribution. Then the core of $\distr$ is the distribution $\core(\distr)=\distr^{(\alpha)}$, where $\alpha$ is the minimal ordinal such that $(\distr^{(\alpha)})'=\distr^{(\alpha)}$.
	\end{definition}

Notice that both Definition \ref{derived_distribution} and Definition \ref{core_dfn} are local in nature: distributions can be restricted to open sets in the obvious way \[\distr_{\lvert U}=\cup_{p\in U}\distr_p\qquad \forall U\subseteq M\text{ open},\]and then $\mathcal{D}'_{|U}=(\mathcal{D}_{|U})'$ and $\core(\distr_{|U})=\core(\distr)_{|U}$.
	
	\section{The core in the algebraic and analytic categories}\label{sec:algebraic}
	
	In the construction of the core, we require that the distribution $\distr$ be closed; this property carries on to the derived distribution, because the support of a closed distribution is a closed set and the tangent distribution to a closed set is a closed distribution, and it is a key ingredient in proving that we need countably many steps to reach the core, which, in turn, is again a closed set.
	
	An analogous situation happens in the category of complex algebraic or analytic varieties.
	
	\begin{proposition}\label{prp_holalgcore}Let $X$ be a complex algebraic (or analytic) manifold and let $\distr\subseteq TX$ be a distribution which is also an algebraic (or analytic) subvariety of $TX$; then:
	\begin{enumerate}
	\item  $S_\distr$ is an algebraic (or analytic) subvariety of $X$, 
	\item $\distr'$ is  an algebraic (or analytic) subvariety of $TX$,
	\item $\core_{\distr}$ is locally reached in a finite number of steps, hence it is an algebraic (or analytic) subvariety of $TX$ and its support is an algebraic (or analytic) subvariety of $X$.
	\end{enumerate}
	\end{proposition}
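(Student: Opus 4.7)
The plan is to handle (1) and (2) by direct invocations of standard algebraic/analytic theorems, and then reduce (3) to Noetherianity of local rings combined with the conic nature of linear distributions. Parts (1) and (2) together will give the induction step showing every $\distr^{(n)}$, for finite $n$, is a subvariety, while the Noetherianity argument in (3) is exactly the finiteness statement needed to avoid any limit-ordinal complication.

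For (1), the key observation is that $\distr\subseteq TX$ is conic in the fibres (linear fibres are scaling-invariant). Consequently $\distr$ descends to a subvariety $\mathbb{P}(\distr)\subseteq\mathbb{P}(TX)$, and $S_{\distr}$ is the image of $\mathbb{P}(\distr)$ under the proper projection $\mathbb{P}(TX)\to X$; hence $S_\distr$ is a subvariety, by Remmert's proper mapping theorem in the analytic case and by properness of projective morphisms in the algebraic case. For (2), once (1) is in hand, the ideal sheaf $\ideal(S_{\distr})$ is coherent (Oka/Cartan in the analytic setting, Noetherianity of the local ring in the algebraic one), so locally it is generated by finitely many holomorphic/regular functions $f_1,\ldots,f_m$. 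The tangent distribution $TS_{\distr}=\bigcap_{i}\ker df_i$ is then cut out by holomorphic/regular $1$-forms and is a subvariety of $TX$, and intersecting with $\distr$ yields $\distr'$. A small point to check is that Section~2 defines $TS$ via the smooth ideal of $S$; for an analytic $S$ a standard argument (essentially Malgrange's) shows that the smooth ideal is generated over $C^{\infty}$ by its analytic subideal, so the two definitions of $TS$ coincide and the analytic generators $f_1,\ldots,f_m$ suffice.

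For (3), induction on $n\in\N$ using (1) and (2) makes every $\distr^{(n)}$ a subvariety of $TX$ with linear fibres. Fix $p\in X$: the local ring $\OO_{TX,(p,0)}$ is Noetherian, so the ascending chain of ideals $\ideal(\distr^{(n)})_{(p,0)}$ stabilises at some finite $N=N(p)$, giving a neighbourhood $W$ of $(p,0)$ in which $\distr^{(N)}$ and $\distr^{(N+1)}$ coincide. Conicity upgrades this to a full tube $TU$ for some $X$-neighbourhood $U$ of $p$: any $(q,w)\in TU$ can be rescaled by a small $\lambda$ into $W$, and the linearity of the fibres of both $\distr^{(N)}$ and $\distr^{(N+1)}$ lets us transfer the equality at $(q,\lambda w)$ and then rescale back. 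Locality of the derived-distribution construction then makes $\distr^{(N)}_{|U}$ perfect, so no limit ordinal is ever encountered locally and $\core(\distr)_{|U}=\distr^{(N)}_{|U}$; applying (1) to $\distr^{(N)}$ yields also the algebraic/analytic structure of its support.

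The genuine obstacle, as far as I can see, is the reconciliation in (2) between the smooth-function definition of $TS$ and the algebraic/analytic ideal sheaf of $S_\distr$; once that is granted, the rest is a clean combination of the proper mapping theorem, coherence of ideal sheaves of subvarieties, Noetherianity of local rings, and the scaling invariance of linear distributions.
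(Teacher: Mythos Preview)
Your proof is correct, but it diverges from the paper's in a few places worth noting.

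For (1), the paper avoids Remmert entirely: it observes that the zero section $X\times\{0\}$ sits inside $\distr$ and, unless $S_\distr=X$, is an irreducible component of $\distr$; writing $Y$ for the union of the remaining components, $S_\distr$ is identified with $Y\cap(X\times\{0\})$, an intersection of subvarieties. Your projectivization-plus-proper-mapping argument is equally valid and arguably more robust (it does not rely on the zero section being a whole component), but it invokes heavier machinery. For (2) the two proofs are essentially the same; the reconciliation of the smooth definition of $TS_\distr$ with the Zariski tangent space is exactly the point, and the paper cites Spallek for it rather than a Malgrange-type statement --- the latter is a correct route too, though the precise result you need (that the $C^\infty$ ideal of a complex analytic set is generated over $C^\infty$ by the analytic ideal) is a bit more than Malgrange's closedness theorem alone and sits in the \L ojasiewicz--Malgrange--Tougeron circle of ideas. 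For (3) both proofs appeal to Noetherianity of the local ring, but your explicit conicity step --- upgrading stabilisation on a $TX$-neighbourhood of $(p,0)$ to a full tube $TU$ so that locality of the derived-distribution construction can be applied --- spells out something the paper passes over with the phrase ``around each point of $TX$''. That extra care is not wasted: the derived distribution is local in $X$, not in $TX$, so one really does need the equality on a set of the form $TU$ before iterating.
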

	\begin{proof}
 We prove the three statements in order.
	
	\paragraph{$S_{\distr}$ is an algebraic (or analytic) subvariety of $X$.} $X\times\{0\}$ is a subvariety of $TX$ which is isomorphic to $X$ and it is contained in $\distr$. If $S_{\distr}=X$, then $S_{\distr}$ is trivially a subvariety of $X$, otherwise there are points $(x,0)\in TX$ where, locally, $X\times\{0\}$ and $\distr$ coincide (i.e. they coincide in an open neighborhood of $(x,0)\in TX$); therefore, $X\times\{0\}$ is an irreducible component of $\distr$ (by the decomposition in irreducible components of complex algebraic, or analytic, subvarieties). 
	Let us denote by $Y$ the union of all the other irreducible components of $\distr$, which is again a subvariety; then, via the isomorphism between $X$ and $X\times\{0\}\subseteq TX$, $S_{\distr}$ corresponds to $Y\cap X\times\{0\}$, which is a subvariety.
	
	\paragraph{$\distr'$ is an algebraic (or analytic) subvariety of $TX$.} By \cite{Spallek}, the tangent distribution to $S_{\distr}$ as we defined it in the previous section coincides with the holomorphic Zariski tangent space (which coincides with the algebraic one for an algebraic subvariety), so it is a subvariety of $TX$. Therefore $\distr'=\distr\cap TS_{\distr}$ is an intersection of two subvarieties of $TX$, hence a subvariety.
	
	\paragraph{$\core(\distr)$ is reached (locally) in a finite number of steps.} The ring of germs of polynomials and the ring of germs of holomorphic functions are Noetherian, hence desceding chains stabilize in a finite number of steps. Therefore, around each point of $TX$, the sequence $\distr\supset \distr'\supset\ldots$ stabilizes in a finite number of steps, which implies that, around each point of $TX$, $\core(\distr)$ is a subvariety (as it is obtained, locally, as the intersection of finitely many subvarieties).

As a consequence, the support of the core is a complex algebraic (or analytic) subvariety of $X$.	
	\end{proof}

\begin{ex}Consider a holomorphic map $F:\C^n\to\C^m$ and define
$$\Omega=\{(z',z_{n+1})\in\C^n\times\C\ :\ \Re z_{n+1}<\|F(z')\|^2\}\;.$$
The distribution given by the kernel of the Levi form of $b\Omega$ is diffeomorphic to (by projection) $\ker \mathrm{Jac}F(x)$ as a distribution in $\C^n$; the latter is a complex analytic distribution and, as the construction of the core is invariant by diffeomorphisms, this shows that the core of the distribution given by the kernel of the Levi form of $b\Omega$ is reached in a finite number of steps.
\end{ex}	

We also have the following geometric results about complex algebraic (or analytic) distributions.
\begin{lemma}Let $X$ be a complex algebraic (or analytic) manifold and $\distr\subseteq TX$ be a distribution which is also a complex algebraic (or analytic) subvariety of $TX$; denote by $Y$ the union of the irreducible components of $\distr$ different from $X\times\{0\}$, then for every $(x,v)\in TX$ we have
$$T_{(x,v)}Y=T_xS_{\distr}\times \distr_x\,.$$
\end{lemma}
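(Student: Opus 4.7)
\emph{Proof plan.} The plan is to work in local holomorphic coordinates around $x$ trivialising $TX$ as $\Omega\times\C^n$ with coordinates $(z,v)$. The structural fact to exploit is that $\distr$, being a distribution with linear fibers, is preserved by the scaling action $\mu_\lambda(z,v)=(z,\lambda v)$ for $\lambda\in\C^*$. Connectedness of $\C^*$ forces $\mu_\lambda$ to fix each irreducible component of $\distr$ setwise, so $Y$ is itself $\C^*$-invariant. Passing to $\lambda\to 0$ and using that $Y$ is closed, one gets $\pi(Y)\times\{0\}\subseteq Y$, with $\pi\colon TX\to X$ the bundle projection. Combined with the identification $\pi(Y)=S_{\distr}$ that falls out of the argument for point (1) of the preceding proposition, this gives $S_{\distr}\times\{0\}\subseteq Y$, and moreover the full linear fiber $\{x\}\times\distr_x\subseteq Y$ for every $x\in S_{\distr}$, since nonzero vectors of $\distr_x$ necessarily lie in a component of $\distr$ other than $X\times\{0\}$.

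The inclusion $T_xS_{\distr}\times\distr_x\subseteq T_{(x,0)}Y$ then follows by monotonicity of Zariski tangent spaces under reverse ideal inclusion: $S_{\distr}\times\{0\}\subseteq Y$ yields $T_xS_{\distr}\times\{0\}\subseteq T_{(x,0)}Y$, and $\{x\}\times\distr_x\subseteq Y$ yields $\{0\}\times\distr_x\subseteq T_{(x,0)}Y$, whose sum is the desired subspace. For the reverse inclusion, the plan is to split $T_{(x,0)}(TX)=T_xX\oplus T_xX$ into horizontal and vertical summands. Horizontally, $\pi(Y)\subseteq S_{\distr}$ gives $\pi^*\ideal(S_{\distr})\subseteq\ideal(Y)$, so $d\pi(T_{(x,0)}Y)\subseteq T_xS_{\distr}$. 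Vertically, the inclusion $Y\subseteq\distr$ reduces the question to $\distr$ itself; since $\ideal(\distr)$ is generated by functions homogeneous in $v$ (a consequence of the $\C^*$-invariance of $\distr$), only the degree-one-in-$v$ generators contribute to the vertical differential at $v=0$, and these linear generators cut out exactly $\distr_x$ because $\distr$ is fiberwise linear.

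The main obstacle I anticipate is this last step: verifying that the degree-one-in-$v$ part of $\ideal(\distr)$ already suffices to cut out $\distr_x$ as a Zariski subspace, without higher-degree homogeneous generators contributing further relations. This should follow from reducedness of $\distr$ together with the linearity of its fibers, and is the place where the analytic/algebraic hypothesis is genuinely needed; the extension from the zero section to a general $(x,v)\in Y$ can then be obtained by transporting the computation along the $\C^*$-action via the differential $d\mu_\lambda$.
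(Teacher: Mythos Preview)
Your approach via the $\C^*$-scaling action is genuinely different from the paper's. For the reverse inclusion the paper chooses a smooth germ $M\subseteq X$ with $S_{\distr}\subseteq M$ and $T_xM=T_xS_{\distr}$, sets $D=M\times\distr_x$ in a local trivialisation, and argues by a geometric dichotomy: either $Y\subseteq D$ near $(x,v)$ (so the inclusion of tangent spaces is immediate), or a fibrewise linear projection $TX|_M\to D$ restricts to a local biholomorphism on $Y$, forcing $T_{(x,v)}Y$ into $T_{(x,v)}D=T_xS_{\distr}\times\distr_x$. No appeal to the grading of $\ideal(\distr)$ is made.

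Your plan has a real gap at the vertical step. The assertion that ``the degree-one-in-$v$ generators cut out exactly $\distr_x$'' is equivalent to saying that $\distr$ is the common zero locus of an $\OO_X$-module of $1$-forms (linear-in-$v$ equations). That is precisely the content of the Proposition that the paper states and proves \emph{immediately after} this Lemma --- and its proof invokes the Lemma (``by the previous lemma $(0,\tilde w)\notin T_x\distr$''). So as written your argument deduces the Lemma from its own corollary. You correctly flag this as ``the main obstacle,'' but ``should follow from reducedness together with linearity of the fibers'' is not a proof: one must actually show that for every $w\notin\distr_x$ there exists a \emph{linear} $g(z,v)=\sum a_j(z)v_j\in\ideal(\distr)$ with $g(x,w)\neq 0$, and the homogeneity of $\ideal(\distr)$ alone only produces some homogeneous $f_k$ of possibly higher degree with $f_k(x,w)\neq 0$. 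A smaller issue: your extension from $(x,0)$ to a general $(x,v)$ ``via $d\mu_\lambda$'' is not straightforward, since $(x,0)$ is a fixed point of the $\C^*$-action, so $d\mu_\lambda$ only relates $T_{(x,v)}Y$ to $T_{(x,\lambda v)}Y$ for $\lambda\neq 0$ and does not connect either of these to $T_{(x,0)}Y$.
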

\begin{proof}Take $x\in S_{\distr}$, otherwise there is nothing to prove. It is obvious that $T_xS_{\distr}\times \distr_x$ is a subspace of $T_{(x,v)}Y$, because $S_{\distr}\times\{0\}\cup\{x\}\times\distr_x$ is contained in $Y$. Now, for the reverse inclusion, consider a complex algebraic (or analytic) manifold $M\subseteq X$ such that $S_{\distr}\subseteq M$ and $T_x S_{\distr}=T_xM$; in a local trivialization of $TX$, consider the product $M\times \distr_x=D$. There are two cases:
\begin{itemize}
\item either $Y$ is locally contained in $D$ around $(x,v)$, then $T_{(x,v)}Y\subseteq T_{(x,v)}M=T_xS_{\distr}\times \distr_x$
\item or, locally around $(x,v)$,  $Y\cap D=\{x\}\times \distr_x$;
\end{itemize}
in the second case, the fiber-wise projection $\pi:TX\vert_M\to D$ given by $\pi(y,v)=(y,\pi_y(v))$ where $\pi_y:T_yX\to \{y\}\times\distr_x$ is a linear projection, is a holomorphic injective map. Hence, locally near $(x,v)$, $\pi\vert_{Y}:Y\to\pi(Y)$ is a biholomorphism with its image; it is clear that $T_{(x,v)}\pi(Y)\subseteq T_{(x,v)}D=T_xS_{\distr}\times \distr_x$ and $D\pi$ (the Jacobian of $\pi$) is the identity along such subspace of $T(TX)$.
\end{proof}

\begin{proposition} If $X$ is a complex algebraic (or analytic) manifold and $\distr\subseteq TX$ is a distribution which is also a complex algebraic (or analytic) subvariety of $TX$, then there exists $\modR\subseteq\Omega^1(X)$ a $\OO$-module of $1$-forms (with $\OO$ the structural sheaf) such that $\distr$ is described as the zero locus of $\modR$.
\end{proposition}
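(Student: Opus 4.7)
The plan is to take $\modR$ to be the $\OO$-submodule of $\Omega^1_X$ whose local sections are 1-forms that, under the canonical identification of $\Omega^1_X$ with regular/holomorphic functions on $TX$ which are linear on fibers, lie in the ideal sheaf $\ideal(\distr)$. With this choice $\distr \subseteq \ker \modR$ is immediate, and the whole content of the statement is the reverse inclusion.

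The key structural input is the $\C^*$-invariance of $\distr$ in the fiber variable: each $\distr_x$, being a linear subspace, is preserved by scalar multiplication, and by closedness the origin lies in $\distr_x$. A standard coefficient-extraction argument then shows that $\ideal(\distr)$ is graded by $v$-degree, in the sense that every local section decomposes as a sum of $v$-homogeneous components all of which lie in $\ideal(\distr)$. The degree-zero piece vanishes because $X\times\{0\}\subseteq\distr$, and Noetherianity of the local rings $\OO_{TX,(x_0,0)}$ — automatic in the algebraic case, and granted in the analytic case by Oka's coherence theorem — furnishes a finite system of $v$-homogeneous generators $h_1,\ldots,h_r$ of $\ideal(\distr)$ of positive $v$-degrees near any $x_0$. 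Under our identification the degree-one piece of $\ideal(\distr)$ is precisely the stalk $\modR_{x_0}$.

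For the reverse inclusion, fix $(x_0,v_0)\notin\distr$; since $X\times\{0\}\subseteq\distr$ we have $v_0\neq 0$ and $v_0\notin\distr_{x_0}$. Linearity of $\distr_{x_0}$ implies that its defining ideal in $\C[v_1,\ldots,v_n]$ is generated by linear forms, namely by the annihilator $(\distr_{x_0})^\perp\subseteq T^*_{x_0}X$. Restricting the $v$-homogeneous generators above to the fiber $x=x_0$ and using that some $h_i(x_0,v_0)\neq 0$, one extracts a linear form $\ell\in(\distr_{x_0})^\perp$ with $\ell(v_0)\neq 0$.

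The crux of the argument — and the step where I expect the technical work to concentrate — is the lifting of such an $\ell$ to an actual local section $\omega$ of $\modR$ with $\omega_{x_0}=\ell$; equivalently, the fiber map $\modR/\mathfrak{m}_{x_0}\modR\to T^*_{x_0}X$ must have image exactly $(\distr_{x_0})^\perp$. The plan is to combine (i) the coherence of $\modR$ as a subsheaf of the locally free $\Omega^1_X$, (ii) the $v$-graded decomposition of $\ideal(\distr)$, and (iii) the previous lemma, whose identification $T_{(x,v)}Y=T_xS_{\distr}\times\distr_x$ pinpoints the vertical conormal directions at smooth points of the non-trivial components of $\distr$ as exactly $(\distr_x)^\perp$. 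Once the lifting is established, the resulting $\omega$ satisfies $\omega_{x_0}(v_0)=\ell(v_0)\neq 0$, witnessing $(x_0,v_0)\notin\ker\modR$ and completing the proof.
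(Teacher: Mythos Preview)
Your approach is essentially the paper's: define $\modR$ as the $v$-degree-one piece of $\ideal(\distr)$ under the identification of $\Omega^1_X$ with fiber-linear functions on $TX$, note $\distr\subseteq\ker\modR$ trivially, and invoke the previous lemma for the reverse inclusion. The paper executes your ``crux'' more economically, however, and without the detour through Noetherianity, finite homogeneous generators, or coherence of $\modR$: given $v_0\notin\distr_{x_0}$, the lemma gives $(0,v_0)\notin T_{(x_0,\cdot)}Y=T_{x_0}S_\distr\times\distr_{x_0}$, so there is an $f$ vanishing on $\distr$ with $\langle(0,v_0),df\rangle\neq 0$; since that pairing at $(x_0,0)$ is exactly $f_1(x_0,v_0)$, the $1$-form associated to $f_1$ already lies in $\modR$ and witnesses $(x_0,v_0)\notin\ker\modR$. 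You do not need the full surjectivity of $\modR/\mathfrak{m}_{x_0}\modR\to(\distr_{x_0})^\perp$, only that its image separates $v_0$ from $\distr_{x_0}$, and that falls out of the lemma in one line.
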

\begin{proof}
Let $\OO_{TX}$ be the structure sheaf of $TX$ as a complex algebraic (or analytic) manifold and take a germ $f\in \OO_{TX,(x,v)}$ such that $f\vert_{\distr}\equiv 0$. In a local trivialization of $TX$, with coordinates $(y,w)$, we can write
$$f=f_0+f_1+f_2+\ldots$$
where each $f_j$ is a homogeneous polynomial of degree $j$ in the variables $w$; then, as $\distr_y$ is a linear subspace of $T_yX$ for every $y\in X$, it is easy to see that $f_0\equiv 0$ and $\distr_y\subseteq \{w\in T_yX\ :\ f_1(y,w)=0\}$.

On the other hand, given $\tilde{w}\not\in \distr_x$, by the previous lemma $W=(0,\tilde{w})\not\in T_x\distr$, therefore there is $f\in \OO_{TX,(x,\tilde{w})}$, vanishing along $\distr$, such that $\langle W, df\rangle\neq 0$ in $(x,\tilde{w})$. But then, as $\distr_x$ is a linear subspace, $\langle W, df\rangle\neq 0$ at $(x,0)$ as well. This implies that $f_1(x,\tilde{w})\neq 0$.

Therefore, for any $f\in \OO_{TX, (x,0)}$ which vanishes identically on $\distr$, consider $f_1$ as defined above and write
$$f_1=\alpha_1(x)v_1+\ldots+\alpha_n(x)v_n\;.$$
To this function we associate the differential form $\alpha=\alpha_1(x)dx_1+\ldots+\alpha_n(x)dx_n\in\Omega^1(X)$; the set of all these differential forms is a $\OO_X$-module, that we denote by $\modR$, such that
$$\{(x,v)\in TX\ :\ \langle \alpha(x),v\rangle=0\ \forall\;\alpha\in\modR\}=\distr\;,$$
by what we proved above.
\end{proof}

In view of these results, we give an algebraic version of the construction of the derived distribution, applied to modules of $1$-forms.
	
Let $X$ be a manifold and $\alg=\alg_X\subseteq\smooth^\infty_X$ be a sheaf of $\R$-algebras (or $\C$-algebras, in case we consider complex valued functions); we furthermore assume that $\mathcal{A}$ gives local coordinates and is closed under differentiation, that is, we require the following two-part property: \begin{enumerate}
		\item[A1)] every $p\in X$ has an open neighborhood $U$ and $x_1,\ldots, x_n\in \mathcal{A}(U)$ such that $(x_1,\ldots, x_n)$ is a system of local coordinates, and 
		\item[A2)] if $f\in \mathcal{A}(U)$, then there exist $f_1,\ldots, f_n\in \mathcal{A}(U)$ such that $df=\sum_{j=1}^nf_jdx_j$. 
	\end{enumerate}
	These properties give a naturally induced sheaf of algebras $\alg_{TX}$ on $TX$.
	
 We denote by $\Omega^\bullet_\mathcal{A}=\bigoplus_{k=0}^n\Omega^k_\mathcal{A}$ the sheaf of smooth differential forms that can be locally represented as a finite sum of terms of the form $fdg_1\wedge \ldots\wedge dg_k$, where $f,g_j$ are local sections of $\mathcal{A}$. It is easily checked that $\Omega^\bullet_\mathcal{A}$ is closed under wedge product and 
	exterior differentiation. 
	
\begin{definition}Given $S\subseteq X$, we define $\ideal_S$ as the ideal of germs $f\in\alg$ which vanish identically on $S$; moreover, we define the \textbf{$\alg$-cotangent module} of $S$ as
the sheaf of $\alg$-modules $T^*_\alg S$ generated by
$$\{df\ :\ f\in\ideal_{S}\}\subseteq\Omega^1_{\alg}\;.$$\end{definition}
\begin{definition}
Given a sheaf of $\alg$-modules $\modR\subseteq \Omega^1_\mathcal{A}$, the \textbf{support} of $\modR$ is 
$$Z_{\modR}=\{x\in X\ :\ \alpha_1(x)\wedge\ldots\wedge\alpha_n(x)=0\ \forall\;(\alpha_1,\ldots,\alpha_n)\in\modR^n\}$$
hence it is the vanishing locus of the sheaf of modules generated by all the $n$-forms $\alpha_1\wedge\ldots\wedge\alpha_n$ where $\alpha_1,\ldots, \alpha_n\in\modR$.

The \textbf{derived module} $\modR'$ is defined as  the sheaf of $\alg$-modules generated by $\modR$ and $T^*_\alg Z_{\modR}$.\end{definition}

\begin{remark}Proceeding by transfinite induction, it is possible to define a concept analogous to the core, a \emph{core module}; however we do not pursue this direction, as in general there is no clear link between this module and the core of the starting distribution.\end{remark}

If $\alg=\smooth^\infty$, or if $X$ is complex algebraic (or analytic) and $\alg$ is the corresponding structure sheaf, we know that, defining $\distr$ as the kernel of $\modR$ (we write $\distr=\ker\modR$), we have that $Z_{\modR}=S_{\distr}$, $\ker T^*_\alg Z_{\modR}=TS_{\distr}$, $\distr'=\ker\modR'$. In such cases, this construction is just an algebraic counterpart of the derived distribution construction; for complex algebraic (or analytic) functions, the algebraic properties of the structure sheaf allow us to derive further conclusions (like the local termination in a finite number of steps and the complex algebraicity - or analyticity - of the core). 

In general, however, we incur in some problems:
\begin{itemize}
\item we do not know that $Z_{\modR}$ is the zero set of an ideal of $\alg$
\item we do not know that $\ker T^*_\alg Z=TZ$ (even if $Z$ is the zero set of an ideal of $\alg$)
\item we do not have any Noetherianity assumption on $\alg$.
\end{itemize}

In the case of real analytic functions, we have the Noetherianity of the local ring; to take care of the other two issues, we consider a \emph{coherent} real analytic space (see Section 1.2 in \cite{AcqBroFer} for details).

\begin{proposition}Let $X$ be a real analytic manifold and $\distr$ be a distribution which is also a real analytic coherent subspace of $TX$, i.e. such that its ideal sheaf $\ideal_{\distr, TX}\subseteq \OO_{TX}$ is \emph{coherent}. Then $S_{\distr}$ is a real analytic coherent subspace of $X$, $\distr'$ and $\core(\distr)$ are again a real analytic coherent subspace of $TX$.\end{proposition}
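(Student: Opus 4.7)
The plan is to transport the three-step argument of Proposition~\ref{prp_holalgcore} to the real analytic setting, with coherence resolving the three obstructions listed immediately before the statement and Noetherianity of the local ring of real analytic germs providing finite termination. Coherence of $\ideal_{\distr,TX}$ allows us to work locally with finitely many generators; the coherence theorems for coherent real analytic spaces (see \cite{AcqBroFer}) ensure that the ideal sheaf of a coherent subspace is itself coherent and that the tangent distribution $TZ$ defined in Section~2 coincides with the Zariski tangent-space sheaf of any coherent $Z$, thereby fixing the first two of the three bullet-listed issues.

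First I would show that $S_\distr$ is a coherent real analytic subspace of $X$. Working in local coordinates $(y,w)$ on $TX$, I pick finitely many generators $f^{(1)}, \ldots, f^{(N)}$ of $\ideal_{\distr,TX}$ and expand each as $f^{(k)} = f^{(k)}_0 + f^{(k)}_1 + f^{(k)}_2 + \cdots$, with $f^{(k)}_j$ homogeneous of degree $j$ in the fiber variable $w$. Linearity of the fibers of $\distr$ forces each component to vanish identically on $\distr$, and the argument of the preceding proposition (adapted to finitely many generators) identifies $\distr_y$ with the common zero locus of the linear forms $w \mapsto f^{(k)}_1(y,w) = \sum_i a^{(k)}_i(y)\,w_i$. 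Consequently $S_\distr$ is the locus where the real analytic matrix $\bigl(a^{(k)}_i(y)\bigr)_{k,i}$ has rank strictly less than $n = \dim X$, cut out by its $n\times n$ minors. These minors generate a coherent ideal in $\OO_X$, and the real analytic coherence results of \cite{AcqBroFer} upgrade this to coherence of $S_\distr$ as a real analytic subspace.

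Once $S_\distr$ is coherent, the tangent distribution $TS_\distr$ is cut out by the coherent ideal sheaf generated by the differentials of local generators of $\ideal_{S_\distr}$, hence is a coherent subspace of $TX$; thus $\distr' = \distr \cap TS_\distr$ is coherent, being defined by the sum of two coherent ideal sheaves. For the core, iterate: the increasing chain $\ideal_{\distr} \subseteq \ideal_{\distr'} \subseteq \ideal_{\distr''} \subseteq \cdots$ of coherent ideal sheaves stabilizes at each germ in finitely many steps by Noetherianity of $\OO_{TX,p}$, so $\core(\distr)$ is locally a finite intersection of coherent subspaces and therefore coherent. The main obstacle is the rigorous justification that the minor construction yields a \emph{coherent} subspace $S_\distr$ and not merely a coherent ideal sheaf whose zero set equals $S_\distr$; this is the precise point at which coherence of $\distr$ must be combined with the real analytic analogs of Cartan's coherence theorems, and it is exactly the obstacle that would prevent the argument from extending beyond the coherent real analytic category.
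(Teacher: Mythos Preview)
Your three-step outline matches the paper's, and steps~2 and~3 (coherence of $\distr'$ via intersection, finite termination via Noetherianity of $\OO_{TX,p}$) are essentially the same as the paper's. The divergence, and the gap, is in step~1.

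You cut out $S_\distr$ by the $n\times n$ minors of a matrix of real analytic functions. This produces a finitely generated, hence coherent, ideal $J\subseteq\OO_X$ with $Z(J)=S_\distr$. But what you need is coherence of the \emph{full} vanishing ideal $\ideal_{S_\distr}$, and in the real analytic category this does not follow: the zero set of a coherent ideal need not be a coherent real analytic subspace. The Whitney umbrella $W=Z(x^2-yz^2)$, discussed just after this proposition in the paper, is exactly such a case --- the defining ideal is principal (hence coherent), yet $\ideal_W$ is not even of finite type. So your ``main obstacle'' is a genuine one, and the appeal to Cartan-type theorems does not close it: Oka--Cartan coherence concerns the structure sheaf and finitely generated ideals, not the passage from a coherent ideal to the coherence of its zero locus as a reduced subspace. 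Nothing in your argument uses the coherence of $\distr$ beyond local finite generation, so nothing distinguishes your situation from the umbrella.

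The paper sidesteps this by following the proof of Proposition~\ref{prp_holalgcore} more literally. Coherence of $\distr$ gives a decomposition into irreducible components; $X\times\{0\}$ is one of them, and if $Y$ denotes the union of the others then $S_\distr$ is identified with $Y\cap(X\times\{0\})$. This exhibits $S_\distr$ directly as an intersection of coherent subspaces, so its coherence is immediate from closure of the coherent category under intersection --- no passage through minors or radicals is needed. The paper also invokes Spallek's theorem explicitly to identify the real analytic Zariski tangent space of $S_\distr$ with the smooth tangent distribution $TS_\distr$ used in the definition of $\distr'$; you allude to this identification but it is worth naming, since it is the point where the $\mathcal{C}^\infty$ definition of the derived distribution reconnects with the analytic category.
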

\begin{proof} By coherence, we can repeat verbatim the proof of Proposition \ref{prp_holalgcore}: $\distr$ admits a decomposition in irreducible components and, if $Z$ is an irreducible real analytic space such that $Z_{(x,v)}=\distr_{(x,v)}$ for some $(x,v)\in TX$, then $Z$ is an irreducible component of $\distr$. Therefore, $S_{\distr}$ is defined as the intersection of real analytic coherent subspaces, hence it belongs to the same category; moreover, the quotient of coherent sheaves is coherent, as it is the dual of a coherent sheaf, so the tangent distribution to a real analytic coherent space is a real analytic coherent subspace of $TX$. Again by \cite{Spallek}, the real analytic tangent distribution coincides with the smooth one and we reach the conclusion for $\distr'$ in the same way. By Noetherianity, we have that the core is reached, locally at each point, in a finite number of steps, hence it is also a real analytic coherent subspace of $TX$, as it is its support.
\end{proof}

\begin{remark}In particular, given a pseudoconvex domain $\Omega$ with real-analytic boundary in some complex manifold $X$, if the kernel of the Levi form gives a coherent distribution in $Tb\Omega$ (viewed as a real-analytic manifold), then the Levi core of $\Omega$ is a coherent real-analytic subset with positive holomorphic dimension. Hence, the results of \cite{DieFor} apply, giving the existence of at least a complex curve inside $b\Omega$ (and hence inside the Levi core).\end{remark}

Without the coherence hypothesis, things break down quite soon.

We first give an example of a non coherent real analytic subspace of $\R^3$, which is nonetheless globally defined by one real analytic function; a coherent sheaf $\mathcal{F}$ is, in particular, of finite type, i.e. at every point $x\in X$ there exist an open set $U\ni x$ and a finite number of sections $s_1,\ldots, s_k\in\Gamma(\mathcal{F},U)$ such that $(s_1)_y,\ldots, (s_k)_y$ generate $\mathcal{F}_y$ for all $y\in U$.

\begin{ex}
		Let $M=\R^3$ and $\mathcal{A}$ be the algebra of real analytic functions; consider $\ideal=(x^2-yz^2)$. Its zero set $W=Z(\ideal)$ is called \emph{Whitney umbrella}. We have that $\ideal_W$ is not of finite type: pick any neighborhood $U$ of $(0,0,0)$, then $\ideal_W(U)\subseteq (x^2-yz^2)$ by real-analyticity, but, for each $p=(0,t,0)$ with $t<0$, $(\ideal_W)_p=(x,z)$.
	\end{ex}
	
	We notice that, in the case of the Whitney's umbrella, the problem is not the requirement for a \emph{finite} number of sections: all the sections of $\ideal_W$ on a neighborhood of $(0,0,0)$ do not generate $\ideal_W$ at any point of the form $(0,t,0)$ with $t<0$.
	
Next, we show that the support of a real analytic distribution may not be a real analytic space, never mentioning it being coherent.
	
\begin{ex}\label{ex:whitney}Consider the Whitney umbrella $W$ and let $\modR$ be the module of $1$-forms given by the cotangent sheaf $\ideal_W/\ideal_W^2$ (as $\ideal_W$ is not of finite type, $\modR$ is also not of finite type). Then $\ker\modR$ is the tangent distribution of $W$, in particular $\ker\modR_p$ with $p=(0,t,0)$, $t<0$, is the distribution locally generated by $\partial_y$; now, set $\distr=\ker\mod\R\cap\ker\mathcal{A}dy$. As $\dim(\ideal_W/\ideal_W^2)^*_p=2$ if $W\ni p\neq (0,t,0)$, $t<0$, we have that $\distr_p\neq 0$ if $p\in W\cap\{y\geq 0\}$; however, $\distr_{(0,t,0)}=\{0\}$ for $t<0$. So $S_{\distr}=W\cap\{y\geq 0\}$ which is not an analytic set in $\R^3$.\end{ex}

Moreover, even if the distribution is defined by a finite type ideal in $TX$, it may happen that its support is not of finite type.	
	
\begin{ex}Consider again $M=\R^3$, $\mathcal{A}$ the algebra of real analytic functions and $\modR$ the module of $1$-forms generated by
		$$xdx-ydz,\ zdx+dy,\ zdy+xdz\;.$$
		If we set $\distr=\ker\modR$, we have that $S_\distr$ is the Whitney umbrella $W$, hence not of finite type, even if $\modR$ is globally generated.\end{ex}
	
	
\subsection{Comments on the general case}
	
An algebraic approach to the reduction to the core finds the following problems, which seem to be interconnected:
	\begin{itemize}
		\item even if the module of $1$-forms defining a distribution is locally generated at each point by its sections, this may not be true for the module defining the derived distribution
		\item even if the module of $1$-forms defining a distribution is locally generated at each point by its sections, this may not be true for the ideal of germs vanishing on its support
		\item if the module of $1$-forms defining a distribution is not locally generated by its sections, its support may not be the zero set of an ideal
		\item even if an ideal $\ideal$ is locally generated at each point by its sections, this may not be true for the radical of such an ideal or for $\ideal(Z(\ideal))$.
	\end{itemize}
	All this issues remain even if we consider a finite number of generating local sections, i.e. sheaves of finite type.
	
	Coherence, in the real-analytic case, helped us to overcome such difficulties; in general, if we can find a good class of sheaves whose elements are locally generated at each point by their sections and which is stable under suitable operations, we may construct a sequence of ideals associated to the derived distributions and hence prove that the core of a distribution is (in such class) a zero set of an ideal.
	
	We notice that, in constructing the module $\modR'$, we had to consider the ideal $\ideal_{Z_{\modR}})$; when we are in $\C^n$ or $\R^n$, $Z_{\modR}$ can be described as the zero set of an ideal $\ideal_\modR$, as all the wedge products $\alpha_1\wedge\ldots\wedge\alpha_n$ will have the form $fdx_1\wedge \ldots\wedge dx_n$. Therefore we are reduced to consider an ideal of the form $\ideal_{Z_{\ideal_\modR}}$.
	
	In many situations, we can describe more precisely this ideal, thanks to Null\- stellen\- satz-type results.
	For example, if we are working with germs of complex polynomials (or holomorphic functions), $\ideal_{Z_{\ideal_\modR}}=\sqrt[h]{\ideal_\modR}$, where $\sqrt[h]{\cdot}$ is the radical appearing in Hilbert's Nullstellensatz (the "usual one"):
	$$\sqrt[h]{\mathcal{J}}=\{f\in\alg :\ g-f^k=0\ \textrm{for some }k\in\N,\ g\in\mathcal{J}\}\;.$$
	In the real (or real analytic) case, the real radical $\sqrt[r]{\cdot}$ plays the same role, where
	$$\sqrt[r]{\mathcal{J}}=\{f\in\alg :\ g-f^{2m}=\sum h_i^2 \;,\ \ m\in\N,\  h_i\in\alg,\  g\in\mathcal{J}\}\;.$$
	Other kinds of radical can be defined, for example, for real germs, adding a \emph{convexity} property (as in \cite{Kohn1979}), we obtain what is called the Lojasiewicz radical in \cite{AcqBroNic},
	$$\sqrt[c]{\mathcal{J}}=\{f\in\alg\ :\ \exists\;g\in\mathcal{J},\ m\in\N\ \textrm{s.t.}\ g-f^{2m}\geq 0\}\;;$$
	this radical appears, for example, in a version of the Nullstellensatz for Denjoy-Carleman quasianalytic functions.
	
	\medskip
	
	It is natural to give the following definition.
	
	\begin{definition}
	Given an $\alg$-module of germs of differential $1$-forms, we can define $\modR^c$
	as the module generated by $\modR$ and
	$$\{df\ :\ f\in\sqrt[c]{\ideal_{\modR})}\}$$
	We define the \textbf{$c$-derivation} as $\distr'_c=\ker\modR^c$. 
	\end{definition}
	\begin{remark}Obviously, we can define $\modR^*$ and $\distr'_*$ with $*\in\{\textrm{h, r}\}$, in an analogous way, however we will only need them for $*=c$ in what follows.\end{remark}
	
	\begin{remark}If a Nullstellensatz-type result holds, one can show the equality between $\modR'$ and the corresponding $\modR^*$ and also between the related cores.\end{remark}
	
	\section{Kohn's algorithm}
	
	We discuss in this section the relation between the construction of the derived distributions and Kohn's algorithm; we refer to \cite{Kohn1979} for the meaning of Kohn's algorithm in relation to the subellipticity problem for $\dbar$ and $\dbar$-Neumann operators.
	We recall the definition of Kohn's algorithm of multiplier ideal sheaves for $(0,1)$-forms: given $\Omega$ a domain in $\C^n$, let $x_0\in b\Omega$ and take a germ $r\in\mathcal{C}^\infty_{x_0}$ such that $r\equiv 0$ on $b\Omega$, but $dr$ does not vanish; define
	$$I_0(x_0)=(r,\star \partial r\wedge\dbar r\wedge (\de\dbar r)^{n-1})$$
	$$I_k(x_0)=\sqrt[c]{(I_{k-1}(x_0), A_{k-1}(x_0))}$$
	where $\star$ is the Hodge $\star$-operator and 
	$$A_{k-1}(x_0)=\big\{\star \de f_1\wedge\dbar f_1\wedge\ldots\wedge \de f_j\wedge\dbar f_j\wedge\partial r\wedge\dbar r\wedge(\de\dbar r)^{n-1-j}\ :$$
	$$\phantom{.}\qquad\qquad\ j\in\N, f_1,\ldots, f_j\in I_{k-1}(x_0)\big\}\;.$$
	
	\begin{theorem}\label{corekohn}If we consider the Levi-null distribution $\levinull$ on $b\Omega$ and its sequence f derived distributions $\levinull=\distr^{(0)}\supseteq \distr^{(1)}\supseteq\ldots\supseteq \core(\levinull)$, we have that
		$$I_k(x_0)\subseteq \ideal(Z(I_{\distr^{(k)}, x_0}))$$
		for all $k\in \N$, where $\ideal_{\distr^{(k)}}$ is the ideal sheaf of the functions that vanish identically on $S_{\distr^{(k)}}$ (whose zero set may, in principle, be larger than the support of $\distr^{(k)}$).
		
		On the other hand, let $\distr^{(\alpha)}_c$ be the sequence of null distributions given by the c-derivation $\modR\mapsto\modR^c$, starting from $\levinull$; then
		$$I_k(x_0)= \sqrt[c]{\ideal_{\distr^{(k)}_c, x_0}}$$
		for all $k>0$.
	\end{theorem}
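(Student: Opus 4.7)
The plan is to prove both statements by induction on $k$, reducing them to a single geometric identity for the top wedge generators appearing in Kohn's algorithm. The pivotal observation, which follows from a Gram determinant computation, is that for any smooth germs $f_1,\dots,f_j$ at $x_0$ the $(n,n)$-form
\[
\star\,\de f_1\wedge\dbar f_1\wedge\cdots\wedge \de f_j\wedge\dbar f_j\wedge\de r\wedge\dbar r\wedge(\de\dbar r)^{n-1-j}
\]
evaluated at $p$ equals, up to a positive factor, the product of the Gram determinant of $\{\de r,\de f_1,\dots,\de f_j\}$ at $p$ and the determinant of the Levi form $\de\dbar r$ restricted to the subspace $V_p:=\ker\de r\cap\bigcap_i\ker\de f_i\subseteq T^{1,0}_p\C^n$. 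It thus vanishes at $p$ precisely when $V_p$ contains a nonzero Levi null vector (or the $\de f_i$'s collapse against $\de r$).

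For the first claim, proceed by induction on $k$. The base case $k=0$ is the formula above with $j=0$: $Z(I_0)=b\Omega\cap\{\mbox{Levi form is degenerate}\}=S_{\levinull}$, so $I_0\subseteq\ideal(S_{\levinull})\subseteq\ideal(Z(\ideal_{\levinull,x_0}))$. For the inductive step, pick $p\in S_{\distr^{(k)}}$ and $0\ne X\in\distr^{(k)}_p=\distr^{(k-1)}_p\cap\C T_pS_{\distr^{(k-1)}}$. By the induction hypothesis every $f\in I_{k-1}$ vanishes on $S_{\distr^{(k-1)}}$, so $df(X)=0$; since also $X\in\levinull_p$, the pivotal formula forces every generator of $A_{k-1}$ to vanish at $p$. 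Combined with the fact that $I_{k-1}$ itself vanishes on $S_{\distr^{(k)}}\subseteq S_{\distr^{(k-1)}}$, one obtains $(I_{k-1},A_{k-1})\subseteq\ideal_{\distr^{(k)},x_0}$, and taking $c$-radicals yields $I_k\subseteq\sqrt[c]{\ideal_{\distr^{(k)},x_0}}\subseteq\ideal(Z(\ideal_{\distr^{(k)},x_0}))$.

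For the second claim, the inclusion $I_k\subseteq\sqrt[c]{\ideal_{\distr^{(k)}_c,x_0}}$ follows automatically from the first part, since $\distr^{(k)}_c\subseteq\distr^{(k)}$ gives $\ideal_{\distr^{(k)},x_0}\subseteq\ideal_{\distr^{(k)}_c,x_0}$. The reverse inclusion requires unwinding the $c$-derivation: the $1$-forms added to pass from $\modR^{(k-1)}_c$ to $\modR^{(k)}_c$ are the differentials of elements of $\sqrt[c]{\ideal_{\modR^{(k-1)}_c}}$, which by a strengthened induction hypothesis agrees with $I_{k-1}$; substituting such $df$'s into the pivotal formula reproduces precisely the generators of $A_{k-1}$, identifying $\ideal_{\modR^{(k)}_c}$ with $(I_{k-1},A_{k-1})$ modulo $c$-radical. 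Then $Z(\ideal_{\distr^{(k)}_c,x_0})=Z(I_k)$, and the $c$-Nullstellensatz alluded to at the end of Section~\ref{sec:algebraic} closes the argument.

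The hard part is this last identification. \emph{A priori} $\modR^{(k)}_c$ encodes more data than $(I_{k-1},A_{k-1})$, and one has to check that the extra generators are subsumed modulo the $c$-radical. Geometrically, this amounts to producing, at any $p$ where $(I_{k-1},A_{k-1})$ vanishes, a common Levi null vector annihilating $df$ for \emph{every} $f\in\sqrt[c]{I_{k-1}}$, not just for any finite subset. The passage from the finite statement provided by the pivotal formula to the infinite one will rely on a Noetherian-type stabilization of the $c$-radical chain $I_0\subseteq I_1\subseteq\dots$ together with a dimension count for the family of subspaces $V_p$.
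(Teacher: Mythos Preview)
Your argument for the first inclusion is correct and parallels the paper's. Both rest on recognizing the Kohn determinants as particular top-wedge determinants of the module $\modR^{(k)}$: you verify this pointwise via a Gram-determinant reading of the wedge, while the paper simply notes that $\modR^{(1)}$ contains $\modR$ together with $\{df:f\in I_0\}$, so its determinant ideal manifestly contains $(I_0,A_0)$; either way the induction closes with $\sqrt[c]{\ideal}\subseteq\ideal(Z(\ideal))$.

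Your treatment of the equality has two genuine gaps. First, the containment $\distr^{(k)}_c\subseteq\distr^{(k)}$ you invoke to get the forward inclusion ``for free'' points the wrong way: since $\sqrt[c]{\ideal_\modR}\subseteq\ideal(Z(\ideal_\modR))$, the $c$-derived module $\modR^c$ is \emph{contained in} the ordinary derived module, so $\distr'_c=\ker\modR^c\supseteq\ker\modR'=\distr'$ and inductively $\distr^{(k)}_c\supseteq\distr^{(k)}$. The forward inclusion survives, but only by rerunning the first-part argument directly for the $c$-derivation, as the paper does. Second, and more seriously, your reverse inclusion is not actually proved: you defer to a ``$c$-Nullstellensatz'' and to ``Noetherian-type stabilization plus a dimension count'', none of which is available in the bare $\smooth^\infty$ setting of the theorem. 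The paper's (terse) route is purely algebraic and needs neither tool: by induction $\modR^{(k)}_c$ is generated by $\modR$ and $\{df:f\in I_{k-1}\}$, and every top-wedge determinant built from these either carries a factor already in $I_{k-1}$ (the $r\,dz_j$ generators), or is one of the Kohn wedges in $A_{k-1}$, or is a ``mixed'' minor of the positive semidefinite Levi block, which is dominated by the diagonal Kohn minors and hence absorbed by the convexity radical $\sqrt[c]{\cdot}$. The Nullstellensatz and topological-Noetherianity machinery you reach for appear in the paper only \emph{after} this theorem, to compare the two constructions in restricted (Denjoy--Carleman) classes, not to prove the theorem itself. Your final paragraph, in particular, misidentifies the difficulty: there is no infinite-vs-finite obstruction to overcome here.
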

	\begin{proof}Let $z_1,\ldots, z_n$ local coordinates around $x_0$, then the module of $1$-forms $\modR$ generated by $\partial r,\dbar r$ and the collections
		$$\de\dbar r(\cdot,\partial_{\bar{z}_j}\ \textrm{ for } j=1,\ldots, m\qquad rdz_j\ \textrm{ for }j=1,\ldots, m\;.$$ 
		Then $\levinull=\ker\modR$ and $I_0(x_0)=\ideal_{\levinull,x_0}$.
		
		Now, $\modR^{(1)}$ is generated by $\modR$ and by the set
		$$\{df\ :\ f\in\ideal(Z(\ideal_{\levinull}))\}\;.$$
		Therefore $\modR^{(1)}_{x_0}$ contains $\modR_{x_0}$ and $\{df\ :\ f\in I_{0}(x_0)\}$; as $\distr^{(1)}=\ker\modR^{(1)}$, when we compute $\ideal_{\distr^{(1)}}$, by definition we consider all the determinants of the matrices obtained using as rows the coefficients of elements of $\modR^{(1)}$. It is clear that this includes all the expressions of the form
		$$\star \de f_1\wedge\dbar f_1\wedge\ldots\wedge \de f_j\wedge\dbar f_j\wedge\partial r\wedge\dbar r\wedge(\de\dbar r)^{n-1-j}$$
		with $j\in\N, f_1,\ldots, f_j\in I_{0}(x_0)$. Therefore
		$$(I_0(x_0), A_0(x_0))\subseteq \ideal_{\distr^{(1)},x_0}\;,$$
		which implies the thesis for $k=1$.
		
		To iterate the argument it is enough to recall that $\sqrt[c]{\ideal}\subseteq \ideal(Z(\ideal))$.
	\end{proof}
	
	\begin{remark}\begin{enumerate}
			\item If the defining function $r$ can be chosen in a suitable subring $\alg$ of $\mathcal{C}^\infty$, then the ideals $I_k(x_0)$ can be considered as ideals in $\alg$.
			\item In some cases, for example when $\alg$ is the ring of germs of holomorphic or real analytic functions, we know that $\sqrt[c]{\ideal}=\ideal(Z(\ideal))$ and then the reduction to the core coincides with Kohn's algorithm.
			\item We recall that in the particular cases mentioned in the previous point, the ring $\alg$ is Noetherian, hence, around each point, the core is reached in a finite number of steps. 
	\end{enumerate}\end{remark}
	
	We would like to give an example of a non Noetherian ring of functions where nonetheless a Nullstellensatz-type result holds, allowing us to conclude, as in the previous remark, that the reduction to the core and Kohn's algorithm are two instantiations of the same idea.
	
	\begin{definition}Let $\mathcal{C}$ be a sheaf such that, for every open set $U$,
		\begin{enumerate} 
			\item $\mathcal{C}(U)$ is a $\R$-subalgebra of $\mathcal{C}^\infty(U)$.
			\item $\mathcal{C}^\omega(U)\subseteq\mathcal{C}(U)$
			\item $\mathcal{C}$ is closed under composition with mappings whose components are in $\mathcal{C}$
			\item $\smooth$ is closed under differentiation
			\item $\smooth$ is quasi-analytic, i.e., if $f\in\smooth(U)$, $a\in U$ and the Taylor series of $f$ at $a$ is identically zero, then $f\equiv 0$ in a neighborhood of $a$
			\item $\smooth$ is closed under division by a coordinate
			\item $\smooth$ is closed under inverse and hence satisfies the Implicit Function Theorem.
		\end{enumerate}
	\end{definition}
	
	An example of such a $\smooth$ is a Denjoy-Carleman quasianalytic class (see \cite{BieMil} for the basic definitions, \cite{Rudin} for the classical theory and \cite{Thill} for a more general approach).
	
	It follows from \cite{BieMil} that the resolution of singularities holds for finitely generated ideals in $\smooth$; in the same paper (Theorems 6.1 and 6.3), the authors also prove what is known as \emph{topological (or geometric) Noetherianity} and \emph{Lojasiewicz inequalities}.
	\begin{theorem}[Topological Noetherianity] \label{topnoet} A decreasing sequence of germs of $\smooth$-sets, stabilizes in a finite number of steps.\end{theorem}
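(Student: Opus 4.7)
The plan is to induct on the dimension of the ambient manifold, using quasianalyticity for the base case and the resolution of singularities of \cite{BieMil} for the inductive step. When the ambient dimension is zero the claim is trivial; more generally, a zero-dimensional germ of a $\smooth$-set at a point $p$ is the zero locus near $p$ of a finitely generated ideal $\mathcal{J}\subseteq\smooth_p$ with $p$ an isolated zero, and the implicit function theorem (property (7)) together with quasianalyticity (property (5)) force this germ to be either empty or equal to $\{p\}$, so such chains stabilize at once.

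For the inductive step, I would take $n=\dim M>0$, a descending chain $Z_1\supseteq Z_2\supseteq\cdots$ of $\smooth$-germs at $p\in M$, and apply the resolution of singularities of \cite{BieMil} to a finite generating set of the ideal defining $Z_1$. This yields a proper map $\pi\colon\tilde M\to U$ — a finite composition of blow-ups along smooth $\smooth$-centers — with $\pi^{-1}(Z_1)$ contained in a simple normal crossings divisor along the compact exceptional fiber $E=\pi^{-1}(p)$. At each $q\in E$ one chooses local coordinates in which this divisor is $\{x_1\cdots x_r=0\}$, so each $\pi^{-1}(Z_i)$ decomposes into germs of $\smooth$-sets on the smooth hypersurfaces $\{x_j=0\}$, each of ambient dimension $n-1$. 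The inductive hypothesis applied on each component, together with the finiteness of their number and the compactness of $E$, then gives uniform stabilization of $\pi^{-1}(Z_i)$ on a neighborhood of $E$. Properness of $\pi$ and the equality $Z_i=\pi(\pi^{-1}(Z_i))$ finally transport the stabilization back down to $M$.

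The main technical obstacle lies in this reduction: one needs that restrictions of elements of $\smooth$ to coordinate hyperplanes remain in $\smooth$ (secured by closure under composition, property (3), and division by a coordinate, property (6)), and that descending chains on a union of transversely intersecting smooth hypersurfaces can genuinely be analyzed componentwise — along the intersections this requires a Lojasiewicz-type inequality in $\smooth$. Both ingredients, together with the resolution itself, are precisely the combined content of Theorems 6.1 and 6.3 in \cite{BieMil}, so the proposed proof amounts to a reorganization of the argument found there.
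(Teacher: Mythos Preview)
The paper does not supply its own proof of this theorem; it simply quotes it from \cite{BieMil} (Theorems~6.1 and~6.3). Your sketch is a correct outline of precisely that argument---induction on the ambient dimension via resolution of singularities and reduction to the hypersurface components of a normal-crossings divisor---so the paper's approach and yours coincide by construction.
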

	
	\begin{theorem}[Lojasiewicz inequalities]Let $f,g\in\mathcal{C}_p$ and suppose that $\{x\ :\ g(x)=0\}\subseteq\{x:f(x)=0\}$ as germs in $p$, then there exist $c, \lambda>0$ such that
		$$|g(x)|\geq c|f(x)|^\lambda\;.$$
		In general, if $Z=\{x:f(x)=0\}$, we can find $c,\nu>0$ such that 
		$$|f(x)|\geq c\mathrm{dist}(x,Z)^\nu\;.$$
	\end{theorem}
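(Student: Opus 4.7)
The plan is to deduce both inequalities from the resolution of singularities for finitely generated ideals in $\smooth$, which is exactly the tool from \cite{BieMil} that the paper has just invoked. The strategy is classical and parallels the real-analytic case: use a proper birational map to reduce $f$ and $g$ to monomials times units, compare exponents on each coordinate chart, and transfer the resulting inequality back using properness of the resolution map together with the fact that units are bounded above and below on compacts.

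For the first inequality, I would shrink to a compact neighborhood $K$ of $p$ and apply resolution of singularities to the principal ideal $(fg) \subseteq \smooth_p$, producing a proper surjective map $\pi : \tilde U \to U$ such that in each of the finitely many coordinate charts covering $\pi^{-1}(K)$ one has $\pi^* f = u_f\, x^\alpha$ and $\pi^* g = u_g\, x^\beta$ with $u_f, u_g$ non-vanishing and $\alpha, \beta \in \N^n$. Surjectivity of $\pi$ transfers the hypothesis $Z(g) \subseteq Z(f)$ to $Z(\pi^* g) \subseteq Z(\pi^* f)$, which in each chart is an inclusion of unions of coordinate hyperplanes; hyperplane-by-hyperplane this forces $\beta_i > 0 \Rightarrow \alpha_i > 0$. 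Setting $\lambda_{\mathrm{chart}} := \max\{\beta_i/\alpha_i : \alpha_i > 0\}$ (or $1$ if the set is empty) then gives the elementary monomial estimate $|x^\beta| \geq |x^{\lambda_{\mathrm{chart}}\alpha}|$ on the chart intersected with $\pi^{-1}(K)$. Taking $\lambda$ to be the maximum over the finitely many charts, absorbing the units into constants, and using the surjectivity of $\pi$ pushes the inequality down to $|g(x)| \geq c|f(x)|^\lambda$ on $K$.

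For the distance inequality, I would apply resolution of singularities to $(f)$ alone, so that in each chart $\pi^* f = u \cdot x^\alpha$ and $\pi^{-1}(Z)$ is the union of coordinate hyperplanes $\{x_i = 0\}$ with $\alpha_i > 0$. In such a chart one has $\mathrm{dist}(\tilde x, \pi^{-1}(Z)) \leq \min_{\alpha_i > 0} |x_i|$, so $|\pi^* f(\tilde x)| \geq c\,|x|^{|\alpha|} \geq c\, \mathrm{dist}(\tilde x, \pi^{-1}(Z))^{|\alpha|}$. Since the coordinate components of $\pi$ lie in $\smooth$ and so are Lipschitz on the compact $\pi^{-1}(K)$, one has $\mathrm{dist}(x, Z) \leq L\, \mathrm{dist}(\tilde x, \pi^{-1}(Z))$ for any preimage $\tilde x$; combining and taking $\nu$ to be the largest $|\alpha|$ across charts yields $|f(x)| \geq c'\, \mathrm{dist}(x, Z)^\nu$ on $K$.

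The main obstacle is not the bookkeeping on charts but ensuring that the entire resolution procedure stays inside $\smooth$: one needs blow-ups whose coordinate functions, the local descriptions of the exceptional divisors, and the resulting units $u_f, u_g$ all lie in $\smooth$. This is precisely what the closure axioms of $\smooth$ --- quasi-analyticity together with closure under composition, inverse, and division by a coordinate --- combined with the main theorem of \cite{BieMil}, guarantee. Once resolution is available in the class $\smooth$, the uniformity of constants across the finitely many charts covering the compact $\pi^{-1}(K)$ is a routine compactness argument.
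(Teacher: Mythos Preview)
The paper does not give its own proof of this statement: it simply records it as Theorems~6.1 and~6.3 of \cite{BieMil}. Your sketch is precisely the classical derivation of the \L{}ojasiewicz inequalities from resolution of singularities, which is the route taken in \cite{BieMil}, so the approaches coincide.

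One small imprecision is worth flagging. Resolving the principal ideal $(fg)$ only guarantees that the \emph{product} $\pi^*(fg)$ is locally a monomial times a unit; in a quasianalytic Denjoy--Carleman class the local ring is not a UFD, so you cannot conclude directly that $\pi^*f$ and $\pi^*g$ are each of this form. The standard fix is to monomialize $f$ first and then further blow up to monomialize the pullback of $g$; since blow-ups in $\smooth$ keep a normal-crossings divisor normal-crossings, the composite map simultaneously monomializes both. With that adjustment your chart-by-chart comparison of exponents and the Lipschitz transfer of the distance estimate go through exactly as you wrote.
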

	
	Localizing the argument from \cite{AcqBroNic}[Theorem 1.1 - part (i)] and employing the two results above, we obtain the following Nullstellensatz-type theorem for the class $\smooth$.
	\begin{theorem}
		If $\ideal\subseteq\smooth$ is a finitely generated ideal, then $$\sqrt[c]{\ideal}=\ideal(Z(\ideal))$$.
	\end{theorem}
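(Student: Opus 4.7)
The plan is to prove the two inclusions of the claimed equality separately; all the substantive content comes from the Łojasiewicz inequality combined with a standard sum-of-squares trick.

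For $\sqrt[c]{\ideal} \subseteq \ideal(Z(\ideal))$, I would argue directly from the definition of the $c$-radical: if $f \in \sqrt[c]{\ideal}$ is witnessed by $g \in \ideal$ and $m \in \N$ with $g - f^{2m} \geq 0$, then on $Z(\ideal)$ one has $g \equiv 0$, forcing $f^{2m} \leq 0$; combined with $f^{2m} \geq 0$ this gives $f \equiv 0$ on $Z(\ideal)$, hence $f \in \ideal(Z(\ideal))$.

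For the nontrivial inclusion $\ideal(Z(\ideal)) \subseteq \sqrt[c]{\ideal}$, I would work at a fixed base point $p$. If $p \notin Z(\ideal)$, then some generator of $\ideal$ is nonzero at $p$, hence a unit in $\smooth_p$ by property (7), so $\ideal = \smooth_p$ and there is nothing to prove. Assuming $p \in Z(\ideal)$, I would pick generators $h_1, \ldots, h_k$ of $\ideal$ and set $g := h_1^2 + \cdots + h_k^2 \in \ideal$. The crucial feature is that $Z(g) = Z(\ideal)$ as germs at $p$ — this equality is the sole reason for requiring finite generation of $\ideal$. Given $f \in \ideal(Z(\ideal))$ one then has $Z(g) \subseteq Z(f)$, so the Łojasiewicz inequality produces $c, \lambda > 0$ and a neighborhood $U$ of $p$ on which $|g| \geq c|f|^\lambda$. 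Shrinking $U$ so that $|f| \leq 1$ on $U$ (possible since $f(p) = 0$) and choosing $m \in \N$ with $2m \geq \lambda$, I obtain
$$g \geq c|f|^\lambda \geq c|f|^{2m} = c\, f^{2m} \qquad \text{on } U,$$
so $\tilde g := g/c \in \ideal$ satisfies $\tilde g - f^{2m} \geq 0$, witnessing $f \in \sqrt[c]{\ideal}$.

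No single step is really hard once the Łojasiewicz inequality is granted; the main obstacle is entirely outsourced to \cite{BieMil}, where the inequality is derived from resolution of singularities for finitely generated ideals in $\smooth$. Topological Noetherianity (Theorem \ref{topnoet}) plays only an auxiliary role, ensuring that $Z(g)$ and $Z(f)$ are well-defined germs of $\smooth$-sets to which the Łojasiewicz estimate can meaningfully be applied, and that the notions used in the proof are stable under the operations performed.
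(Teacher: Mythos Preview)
Your proof is correct and is precisely the localized sum-of-squares plus \L ojasiewicz argument that the paper invokes via \cite{AcqBroNic}; the paper itself gives no further details beyond that reference and the two quoted theorems. One small quibble: topological Noetherianity is not actually used anywhere in the argument you wrote---the germs $Z(g)$ and $Z(f)$ are perfectly well-defined as zero sets of single functions in $\smooth_p$---so your final paragraph overstates its role.
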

	
	Moreover, as a consequence of Theorem \ref{topnoet}, we obtain the following.
	
	\begin{proposition}\label{prop_fingen}Let $\ideal\subset\smooth_p$ be any ideal of germs at the point $p$. Then there exists $\mathcal{J}\subseteq\ideal$, finitely generated, such that $Z(\mathcal{J})=Z(\ideal)$.
	\end{proposition}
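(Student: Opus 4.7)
The plan is to apply Topological Noetherianity (Theorem~\ref{topnoet}) to the family of zero sets obtained from finitely generated subideals of $\ideal$, and to show that the minimum of this family is already $Z(\ideal)$.

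Concretely, I would consider
\[
\mathcal{F} = \{Z(\mathcal{J}) : \mathcal{J} \subseteq \ideal \text{ finitely generated}\},
\]
a family of germs of $\smooth$-sets at $p$. Since the sum of finitely generated subideals of $\ideal$ is again a finitely generated subideal of $\ideal$ and $Z(\mathcal{J}_1) \cap Z(\mathcal{J}_2) = Z(\mathcal{J}_1 + \mathcal{J}_2)$, the family $\mathcal{F}$ is closed under finite intersections. Theorem~\ref{topnoet} implies that every descending chain in $\mathcal{F}$ stabilizes; combined with directedness, this produces a minimum element $Z_0 = Z(\mathcal{J}_0)$ for some finitely generated $\mathcal{J}_0 \subseteq \ideal$.

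Next, I would argue that $Z(\mathcal{J}_0) = Z(\ideal)$ as germs at $p$, which is the content of the proposition. The inclusion $Z(\ideal) \subseteq Z_0$ is immediate from $\mathcal{J}_0 \subseteq \ideal$. For the reverse inclusion, observe that for every $f \in \ideal$ the ideal $\mathcal{J}_0 + (f)$ is a finitely generated subideal of $\ideal$ and $Z(\mathcal{J}_0 + (f)) \in \mathcal{F}$ with $Z(\mathcal{J}_0 + (f)) \subseteq Z_0$; by minimality of $Z_0$, equality must hold, so each $f \in \ideal$ vanishes on $Z_0$ in some neighborhood $V_f$ of $p$.

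The main obstacle, and the only nontrivial step, is upgrading this \emph{pointwise} vanishing (each $f$ in its own $V_f$) to a \emph{uniform} vanishing (a single $V$ valid for all $f \in \ideal$ simultaneously), which is what yields $Z_0 \subseteq Z(\ideal)$ as germs. This quantifier swap is not formal; I would handle it by exploiting the regularity of $\smooth$-sets. A germ of $\smooth$-set at $p$ admits a finite decomposition into irreducible components (itself a consequence of Theorem~\ref{topnoet} applied to the descending chain of unions of components), each of which is connected in a small enough ball $B$ around $p$. The identity theorem for quasi-analytic functions (property (5) of $\smooth$) then forces any $f$ vanishing on a nonempty open subset of such a component to vanish on the whole component within $B$. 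Since $B$ depends only on $Z_0$ and not on $f$, this yields the required common neighborhood and closes the proof.
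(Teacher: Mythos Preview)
Your argument is essentially the same as the paper's: both extract a finitely generated subideal with the same zero germ by invoking topological Noetherianity (Theorem~\ref{topnoet}). The only difference is packaging. The paper builds $\mathcal{J}=(g_1,\dots,g_k)$ greedily---at each step either $Z((g_1,\dots,g_k))=Z(\ideal)$ and we stop, or some $g_{k+1}\in\ideal$ strictly shrinks the zero set and we continue; an infinite run would give a strictly decreasing chain of $\smooth$-germs, contradicting Theorem~\ref{topnoet}. You instead phrase this as ``the family of zero sets of finitely generated subideals is downward directed and satisfies DCC, hence has a minimum.'' These are the same argument.

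Where you diverge from the paper is in the last paragraph: you isolate a quantifier-swap issue (from ``each $f\in\ideal$ vanishes on $Z_0$ in its own $V_f$'' to ``all $f$ vanish on $Z_0$ in a common $V$'') and propose to resolve it via an irreducible decomposition of $Z_0$ together with an identity principle for quasi-analytic functions. The paper does not raise this point at all; once no single additional generator shrinks $Z((g_1,\dots,g_k))$, it simply declares $Z((g_1,\dots,g_k))=Z(\ideal)$ and stops. So your extra step is not part of the paper's proof, and if one regards it as a genuine gap, it is a gap shared by the paper. That said, your proposed fix is sketchy: the identity principle you have available (property~(5) of $\smooth$) concerns functions with vanishing Taylor series at a point, not functions vanishing on an open piece of a possibly singular $\smooth$-set, and the existence and connectedness properties of an ``irreducible decomposition'' of $\smooth$-germs would themselves need justification in this class. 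If you want to keep this step, it needs a more careful argument; if you want to match the paper, you can simply omit it.
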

	\begin{proof} If $\ideal$ is the zero ideal, it is finitely generated. If not, there is $g_1\in\ideal$ such that $(g_1)\neq 0$; if $Z((g_1))=Z(\ideal)$, we set $\mathcal{J}=(g_1)$, otherwise we find $g_2\in\ideal$ such that $Z((g_1,g_2))\neq Z((g_1))$. If $Z((g_1,g_2))=Z(\ideal)$, we set $\mathcal{J}=(g_1,g_2)$, otherwise we continue as before.
		
		If we never obtain that $Z((g_1,\ldots, g_k))=Z(\ideal)$, we produce a sequence of ideals
		$$(g_1)\subseteq (g_1,g_2)\subseteq (g_1,g_2,g_3)\subseteq \ldots\;.$$
		By Theorem \ref{topnoet}, the sequence of the zero loci stabilizes after a finite number of steps; this means that
		$$Z((g_1,\ldots, g_k))=Z((g_1,\ldots, g_k, g_{k+1}))$$
		which contradicts the construction of the sequence $\{g_k\}$. Therefore the previous construction will stop in a finite number of steps, giving
		$$Z(\mathcal{J})=Z(\ideal)$$
		with $\mathcal{J}=(g_1,\ldots, g_k)$.
	\end{proof}
	
	Combining these two results and recalling the remark after Theorem \ref{corekohn}, we obtain that, if $r\in\smooth$, then $\distr^{(k)}_c=\distr^{(k)}$ for all $k\in\N$; therefore
	$$I_k(x_0)\cap\smooth_{x_0}=I_{\distr^{(k)},x_0}\cap \smooth_{x_0}\;.$$
	As a consequence of Theorem \ref{topnoet}, the reduction to the core and Kohn's algorithm terminate in a finite number of steps; moreover, Kohn's algorithm gives a subelliptic estimate if and only if the core is trivial.
	
	\begin{remark} We notice that the results we employed here are also presented in \cite{Nicoara}, where they are used to prove that Kohn's algorithm terminates succesfully if and only if the boundary is of finite D'Angelo type for domains defined by a function in a Denjoy-Carleman quasianalytic class.\end{remark}
	
	\section{Other examples of cores}
	
	The definition of core can be specialized in a number of interesting situations. 
	
	\begin{enumerate}
		\item Given a complex manifold $M$ and a plurisubharmonic function $\phi:M\to\R$, we define the distribution $\levinull_\phi$ given by the kernel of $\de\dbar \phi$ and its core $\core(\levinull_\phi)$ will be referred to as the \emph{core of $\phi$}.
		\item Given a weakly complete complex manifold $M$, let $\mathcal{E}$ be the set of (smooth) plurisubharmonic exhaustion functions; we define 
		$$\levinull_{\mathcal{E}}=\bigcap_{\phi\in E}\levinull_\phi$$
		and the \emph{weakly complete core} as $\core(\levinull_{\mathcal{E}})$.
		\item Given a weakly complete complex manifold $M$, let $\mathcal{E}$ be the set of (smooth) plurisubharmonic exhaustion functions; we define 
		$$\distr_{\mathcal{E}}=\ker\{d\phi\}_{\phi\in E}$$
		and the \emph{psh core} as $\core(\distr_{\mathcal{E}})$.
	\end{enumerate}
	
	\begin{remark}Given a domain in $\C^{n+1}$ of the form
		$$\{(z',z_n)\in\C^n\times\C\ :\ \Re z_n<\phi(z')\}$$
		for $\phi:\C^n\to\R$, there is a natural identification between $S_{\core(\levinull_\phi)}$ and $S_{\core(\levinull)}$.\end{remark}
	
	\begin{proposition}\label{kernels}We have that $S_{\levinull_{\mathcal{E}}}=S_{\levinull'_{\mathcal{E}}}=S_{\distr_{\mathcal{E}}}=S_{\distr'_{\mathcal{E}}}$; therefore, the derived distributions $\levinull'_{\mathcal{E}}$ and $\distr'_{\mathcal{E}}$ are \emph{perfect}, hence they coincide with their core.\end{proposition}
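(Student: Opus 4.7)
My plan is as follows. The statement splits into two parts: (a) the equality of the four supports, and (b) the perfectness of the derived distributions $\distr'_{\mathcal{E}}$ and $\levinull'_{\mathcal{E}}$. I would first observe that (b) is a formal consequence of (a): if $\distr$ is any closed distribution with $S_{\distr'}=S_{\distr}$, then $TS_{\distr'}=TS_{\distr}$ and, by Definition \ref{derived_distribution}, $\distr'\subseteq TS_{\distr}$, so that
\[
\distr''=\distr'\cap TS_{\distr'}=\distr'\cap TS_{\distr}=\distr',
\]
and analogously for complex distributions. Hence, once (a) is established, $\distr'_{\mathcal{E}}$ and $\levinull'_{\mathcal{E}}$ are perfect and coincide with their cores.

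For (a), my approach would be to identify all four supports with the minimal kernel $\Sigma$ of the weakly complete manifold $M$: the smallest closed subset outside of which $M$ admits a local Stein structure, in the spirit of the Slodkowski--Tomassini--Mongodi theory. Two inclusions need to be checked. First, at any $p\notin\Sigma$, the local Stein structure together with Richberg-type smoothing and the already available global psh exhaustion should produce global psh exhaustions whose differentials (respectively, Levi forms) at $p$ span $T_p^*M$ (respectively, are strictly positive on $T^{1,0}_pM$), forcing $\distr_{\mathcal{E},p}=0$ and $\levinull_{\mathcal{E},p}=0$, so that $S_{\distr_{\mathcal{E}}},S_{\levinull_{\mathcal{E}}}\subseteq\Sigma$. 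Second, at any $p\in\Sigma$, the tangent vector $v$ to the complex leaf of the Levi-flat foliation of $\Sigma$ through $p$ is annihilated by every $d\phi$ and every $\partial\bar\partial\phi$ (each $\phi\in\mathcal{E}$ being constant on such leaves by maximum-principle arguments applied to psh functions on compact pieces of the foliation) and at the same time $v\in T_p\Sigma\subseteq T_pS_{\distr_{\mathcal{E}}}$; combined with the automatic inclusions $S_{\distr'_{\mathcal{E}}}\subseteq S_{\distr_{\mathcal{E}}}$ and $S_{\levinull'_{\mathcal{E}}}\subseteq S_{\levinull_{\mathcal{E}}}$, this closes the chain of equalities.

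The main obstacle is the structural input on the minimal kernel, namely that it is a closed Levi-flat set foliated by complex leaves along which every smooth psh exhaustion of $M$ is constant. This is a nontrivial fact from the theory of weakly complete manifolds; once it is invoked, the proof is a direct check. An alternative route would bypass the minimal-kernel machinery by arguing directly that the family of conditions $d\phi_p(v)=0$ for all $\phi\in\mathcal{E}$, combined with the plurisubharmonicity of the $\phi$'s and the richness of $\mathcal{E}$ under positive combinations and pluriharmonic perturbations, forces $\partial\bar\partial\phi_p(v-iJv,\overline{v-iJv})=0$ for every $\phi$, and conversely; but formalizing this second approach seems to require essentially the same foliated structure as the first.
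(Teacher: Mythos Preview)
Your formal reduction of (b) to (a) is correct and matches what the paper uses implicitly. The gap is in your argument for (a).

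You propose to take, at each $p\in\Sigma$, ``the tangent vector $v$ to the complex leaf of the Levi-flat foliation of $\Sigma$ through $p$'', and you state as the needed structural input that the minimal kernel ``is a closed Levi-flat set foliated by complex leaves along which every smooth psh exhaustion of $M$ is constant''. This is not known in general. The structure theorems you have in mind (Mongodi--Slodkowski--Tomassini) give such a foliated picture only for complex \emph{surfaces}; in arbitrary dimension the minimal kernel need not be a Levi-flat hypersurface foliated by complex submanifolds, so there is no ``complex leaf through $p$'' to which you can apply your maximum-principle argument. Hence the key step of your plan---producing a nonzero $v\in T_p\Sigma$ with $d\phi_p(v)=0$ and $\partial\bar\partial\phi_p(v,\bar v)=0$ for all $\phi\in\mathcal{E}$---is not justified.

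The paper avoids this by working with the weaker notion of \emph{local maximum set}. For $\distr_{\mathcal{E}}$, at a point $p\in\Sigma_M$ one first chooses finitely many $\phi_1,\ldots,\phi_k\in\mathcal{E}$ realizing $\distr_{\mathcal{E},p}$, and then the intersection $F=\Sigma_M\cap\{\phi=\phi(p),\,\phi_j=\phi_j(p)\}$ is a local maximum set by iterated application of a lemma from \cite{MonSloTom2}. A separate lemma (proved in the paper as Lemma~\ref{complextg}) shows that the tangent space to any local maximum set contains a complex line; since $T_pF\subseteq T_pS_{\distr_{\mathcal{E}}}\cap\distr_{\mathcal{E},p}=\distr'_{\mathcal{E},p}$, this gives $p\in S_{\distr'_{\mathcal{E}}}$. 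For $\levinull_{\mathcal{E}}$, the paper uses Slodkowski's decomposition $\Sigma_M=\bigcup_\alpha F_\alpha$ into compact local maximum sets on which every $\phi\in\mathcal{E}$ is constant, and then adapts \cite[Thm.~3.13]{Levicore} to each $F_\alpha$. Neither step requires a genuine complex foliation of $\Sigma_M$. Your proposal would be repaired by replacing the nonexistent ``complex leaf'' with a suitable local maximum set and supplying the corresponding tangent-space lemma.
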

	\begin{proof}
		From \cite[Proposition 4.2]{BiaMon}, we have that $S_{\levinull_{\mathcal{E}}}=S_{\distr_{\mathcal{E}}}=\Sigma_M$ is the \emph{minimal kernel} of $M$, as defined in \cite{SloTom}; therefore, by \cite[Lemma 3.1]{SloTom}, there exists a \emph{minimal function}, i.e. a smooth plurisubharmonic exhaustion function $\phi:M\to\R$ such that $\phi$ is strictly plurisubharmonic outside of $\Sigma_M$.
		
		By repeated applications of \cite[Lemma 3.4]{MonSloTom2}, given $\phi_1,\ldots, \phi_k\in\mathcal{E}$, and real numbers $c_0$,$\ldots$, $c_k$, we have that
		$$\Sigma_M\cap\{\phi=c_0,\ \phi_1=c_1,\ldots,\ \phi_k=c_k\}$$
		is a local maximum set, if not empty. We need the following property of local maximum sets: if $K$ is a local maximum set, then $T_xK$ contains at least a complex line for each $x\in K$. We prove it separately as Lemma \ref{complextg}, after the end of this proof.
		
		Given $p\in S_{\distr_{\mathcal{E}}}$, we have that there exist $\phi_1,\ldots, \phi_k\in\mathcal{E}$ such that 
		$$\distr_{\mathcal{E},p}=(\ker d\phi_1\cap\ldots\cap\ker d\phi_k)\vert_p\;.$$
		Define
		$$F=S_{\distr_{\mathcal{E}}}\cap\{x\in M\ :\ \phi(x)=\phi(p),\ \phi_1(x)=\phi_1(p),\ \ldots,\ \phi_k(x)=\phi_k(p)\}\;;$$
		as we noted before, $F$ is a local maximum set (it contains at least $p$, so it is not empty) and $T_pF$ contains at least a copy of $\C$. Note that
		$$T_pF\subseteq T_pS_{\distr_{\mathcal{E}}}\cap (\ker d\phi_1\cap\ldots\cap\ker d\phi_k)\vert_p=T_pS_{\distr_{\mathcal{E}}}\cap \distr_{\mathcal{E},p}=\distr'_{\mathcal{E},p}\;.$$
		Therefore $p\in S_{distr'_{\mathcal{E}}}$.
		
		\medskip
		
		Let now $p\in S_{\levinull_{\mathcal{E}}}$. By \cite[Theorem 5.2]{Slod18}, 
		$$S_{\levinull_{\mathcal{E}}}=\Sigma_M=\bigcup_{\alpha\in A}F_\alpha$$
		where each $F_\alpha$ is a compact local maximum set on which every $\phi\in\mathcal{E}$ is constant. 
		
		By adapting the proof of \cite[Theorem 3.13]{Levicore}, we can show that, for every $\phi\in\mathcal{E}$, $\ker\de\debar\phi\cap TF_\alpha\cap JTF_\alpha$ has support equal to $F_\alpha$.
		
		Therefore, the support of $TF_\alpha\cap \ker \de\debar\phi\subseteq TS_{\levinull_{\mathcal{E}}}\cap\ker\de\debar\phi$ contains $F_\alpha$ for all $\alpha\in A$ and all $\phi\in\mathcal{E}$, hence the support of $\levinull'_{\mathcal{E}}=TS_{\levinull_{\mathcal{E}}}\cap\levinull_{\mathcal{E}}$ contains $F_\alpha$ for all $\alpha\in A$, i.e. coincides with the support of $\levinull_{\mathcal{E}}$.
	\end{proof}
	
	\begin{lemma}	\label{complextg}If $K\subset M$ is a local maximum set in a complex manifold, then $T_xK$ contains at least a complex line, for all $x\in \overline{K}$.\end{lemma}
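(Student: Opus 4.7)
The strategy is to argue by contradiction. Suppose that for some $x\in\overline K$ the tangent $V := T_xK$ contains no complex line, i.e.\ $V\cap JV = \{0\}$; then $V$ is totally real in $T_xM$, so in particular $\dim_{\R}V = k \le n$. I would then produce a smooth strictly plurisubharmonic function $\phi$ on a neighborhood of $x$ having a strict local maximum at $x$ along $\overline K$; since $\overline K$ inherits the local maximum property from $K$ by continuity of plurisubharmonic functions, this contradicts the definition of a local maximum set.

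The first step would be to use the very definition of $T_xK$: the annihilator $V^{\circ}\subset T_x^*M$ is spanned by $\{df_x : f\in \ideal(K)\}$. Pick finitely many real-valued smooth $f_1,\dots,f_m$ with $f_j|_K = 0$ and $\{df_j|_x\}_{j}$ a basis of $V^{\circ}$. By the implicit function theorem the common zero set $N := \{f_1 = \cdots = f_m = 0\}$ is, in a neighborhood of $x$, a smooth real submanifold of dimension $k$ tangent to $V$ at $x$, and by continuity of the $f_j$ we have $\overline K \subset N$ locally.

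Since $V$ is totally real, one can choose holomorphic coordinates $z = (z_1,\dots,z_n)$ centered at $x$ in which $V$ becomes the standard $\R^k\times\{0\}\subset \C^n$. Writing $z_j = x_j + iy_j$, set
\[
\phi(z) := C|z|^2 - \sum_{j=1}^k x_j^2,
\]
with $\tfrac12 < C < 1$ (say $C=\tfrac34$). From $\de\dbar x_j^2 = \tfrac12\, dz_j\wedge d\bar z_j$ and $\de\dbar|z_j|^2 = dz_j\wedge d\bar z_j$ one computes
\[
\de\dbar\phi = \sum_{j=1}^k (C-\tfrac12)\, dz_j\wedge d\bar z_j + \sum_{j=k+1}^n C\, dz_j\wedge d\bar z_j > 0,
\]
so $\phi$ is strictly plurisubharmonic, while $\phi|_V = (C-1)\sum_{j=1}^k x_j^2$ is strictly negative off $x$.

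To close the argument, parameterize $N$ locally by $(x_1,\dots,x_k)$: the tangency of $N$ to $V$ forces the transverse coordinates to vanish to order two in $x$, hence
\[
\phi|_N = (C-1)\sum_{j=1}^k x_j^2 + O(|x|^4),
\]
which is strictly negative on $N\setminus\{x\}$ near $x$. Since $\overline K \subset N$ locally, $\phi$ has a strict local maximum on $\overline K$ at $x$, the desired contradiction. The one delicate ingredient is the construction of $\phi$: we need a strictly psh function whose real Hessian at $x$ is negative definite along $V$, and this is exactly what the total reality of $V$ allows; everything else is then a routine second-order comparison.
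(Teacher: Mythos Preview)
Your argument is correct and follows the same strategy as the paper's proof: assume $T_xK$ is totally real, locally embed $K$ in a real submanifold $N$ with $T_xN=T_xK$, and exhibit a plurisubharmonic function with a strict local maximum on $N$ at $x$, contradicting the local maximum property. The only difference is presentational---you construct the submanifold and the psh function explicitly (via defining functions from $\ideal(K)$ and the formula $C|z|^2-\sum_{j\le k} x_j^2$, together with the second-order Taylor comparison on $N$), whereas the paper simply invokes the existence of such a submanifold and of a psh peak function on a totally real germ.
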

	\begin{proof}If $\dim_\R T_xK>\dim_\C M$, then obviously $T_xK\cap JT_xK\neq\{0\}$ by Grassmann's formula. So, let us suppose that $k=\dim_\R T_xK\leq n$.
		
		Now, let $(V,x)$ be a germ of $k$-dimensional real submanifold such that $(K,x)\subseteq (V,x)$ and $T_xK=T_xV$; if $T_xK$ is totally real, then $T_yV$ is totally real for all $y$ in a neighborhood of $x$ in $V$. As $(V,x)$ is a germ of totally real submanifold, we can find a psh function $u$, defined on a small neighborhood $U\subseteq X$ of $x$, such that $u(x)=0$ and $u(y)<0$ for all $y\in U\cap V$. But, as $(K,x)\subseteq (V,x)$, up to shrinking $U$, we can suppose that $K\cap U\subseteq V\cap U$, so that $u$ would violate the local maximum property of $K$.
		
		Therefore, $T_xK$ cannot be totally real, so it contains at least a complex line.
	\end{proof}
	
	The support of $\levinull_{\mathcal{E}}$ is the so called minimal kernel of a weakly complete space; its geometry has been studied in a number of papers by the second author and others (see e.g. \cite{SloTom,MonSloTom1, MonSloTom2, MonSloTom3, MonTom, Slod18} and the references therein). This ``weak integrability property" given by Proposition \ref{kernels} surely hints at the existence of some analytic structure, which, up to now, has been found only in the case of complex surfaces.

	\section{Rigid domains}
In this section we discuss the relation between Catlin's property (P) and the triviality of the core (see also \cite{Treuer}).
	
	Let $h:\C^n\to\R$ be a $\smooth^2$ function and define
	$$\Omega_h=\{(z',z_{n+1})\in\C^n\times\C\ :\ \Re(z_{n+1})+h(z')<0\}\;.$$
	Let $X=b\Omega_h$, then for $p=(p',p_{n+1})\in X$, 
	$$T_p^{1,0}X=\{Y'_p+\gamma_p\de_{z_{n+1}}\ :\ Y'_p\in T^{1,0}_{p'}\C^n,\ 2\de h (Y'_p)+\gamma_p=0\}\;.$$
	
	In particular, the map $T^{1,0}_pX\ni (Y'_p+\gamma\de_{z_{n+1}})\mapsto Y'_p\in T^{1,0}_{p'}\C^n$ is a $\C$-linear isomorphism, which gives an isomorphism of complex vector bundles from $T^{1,0}X$ to $T^{1,0}\C^n$.
	
	Moreover, the Levi form on $X$ is 
	$$\lambda_p(Y'_p+\gamma_p\partial_{z_{n+1}}, \overline{Z}'_p+\overline{\delta}_p\de_{\bar{z}_{n+1}})=i\de\dbar h_p (Y'_p, \overline{Z}'_p)$$
	so it corresponds, under said isomorphism, to the complex Hessian of $h$ on $T^{1,0}\C^n$. Hence, $\Omega_h$ is pseudoconvex if and only if $h$ is plurisubharmonic.
	
	Moreover, $\core(\levinull)$ in $T^{1,0}X$ corresponds, under such an isomorphism to $\core(\levinull_h)$ in $T^{1,0}C^n$.
	
	As a special case, when $n=1$, we have that $h:\C\to\R$ should be subharmonic; given $K\subseteq \C$ closed, we can find a $\smooth^\infty$ function $g:\C\to[0,+\infty)$ such that $K=\{z\in\C\ :\ g(z)=0\}$.
	
	Solving the equation $\triangle h=g$, we obtain a subharmonic function whose Laplacian vanishes exactly on $K$, so that $\levinull_h=T^{1,0}\C\vert_K$.
	
	At any point $p\in K$, $\levinull_{h,p}\cap \C T_p K$ can be either $\levinull_{h,p}$ or $\{0\}$, depending whether $T_pK$ is $\C$ or not (respectively).
	
	\begin{ex}Let $C\subset\R$ be a Cantor set in $[0,1]$ and define $K=C+iC\subseteq\C$; we consider the function $h$ described above and the corresponding rigid domain $\Omega_h\subseteq\C^2$.
		
		As the Cantor set is a perfect set, $T_xC=\R$ for all $x\in\R$, so that $T_zK=\C$ for all $z\in K$; hence $\levinull_h$ is a perfect distribution in $T^{1,0}\C$ and so is the Levi null distribution $\levinull$ on $b\Omega_h$.
		
		Therefore, the Levi core of $b\Omega_h$ is non trivial and supported on the set
		$$\{(z,w)\in b\Omega_h\ :\ z\in K\}\cong K\times\R\;.$$
		
		We notice that the core here is "small", in the sense that, as the Lebesgue measure of $C$ in $\R$ is zero, the $3$-dimensional Hausdorff measure of $S_{\core(\levinull)}$ is zero inside $b\Omega_h$.
	\end{ex}

	A bounded version of $\Omega_h$ is easily obtained. Let
	$$\Omega'_h=\{(z',z_{n+1})\in\C^n\times\C\ :\ \Re(z_{n+1})+h(z')+ \lambda(\|z'\|^2+|z_{n+1}|^2)<0\}\;,$$
	where $\lambda$ is a non negative, increasing, convex function such that $\lambda(t)=0$ if $t\leq 2$ and $\lambda(t),\lambda'(t),\lambda''(t)>0$ if $t>2$.
	We have that $\Omega_h'$ is pseudoconvex and bounded and strictly pseudoconvex if $\|z'\|^2+|z_{n+1}|^2>2$; moreover, $b\Omega_h\cap B_{\sqrt{2}}(0,0)\subseteq b\Omega_h'$; so,  the support of the core of $\Omega_h'$ is contained in
	$$\{(z,w)\in b\Omega_h\ :\ z\in K\}\cap \cap B_{\sqrt{2}}(0,0)\;.$$
	
	\begin{ex}
		Let $C\subset[0,1]$ be a Cantor set such that
		$$C=\left(\frac{1}{5}C + 0 \right)\cup\left(\frac{1}{5}C + \frac{4}{5}\right)$$
		then $\dim_H C=\log(2)/\log(5)<1/2$ ($\dim_H$ is the Hausdorff dimension). This means that $\dim_H(C\times C\times \R)<2$.
		
		Constructing $\Omega_h'$ on the set $K=C+iC$, we obtain that the support of the core has vanishing $2$-dimensional Hausdorff measure; by \cite[Remarque p.310]{Sibony}, this implies that $U_{\phi,\epsilon}$ is B-regular, or, in an equivalent formulation, that it satisfies Catlin's Property (P).
	\end{ex}
	
	Therefore, we showed that Catlin's property (P) (or B-regularity) does not imply trivial core.
	
	\medskip
	
	Unfortunately, these examples all have trivial D'Angelo class: the complex Hessian of the defining function vanishes completely at the points of the core, so every D'Angelo form is zero. Hence, the Diederich-Fornaess exponent of the domains from the two previous examples is $1$.
	
	\medskip
	
	If we assume some more properties of the set $K$, we can produce other kinds of examples. Suppose that $K$ is a compact made up of points which are regular for the Dirichlet problem, then there exists a smooth function $u:\C\setminus K \to (0,+\infty)$ such that $u(x)\to 0^+$ when $\C\setminus K\ni x\to K$, $u$ has a logarithmic pole at $\infty$, and $\triangle u=0$ on $\C\setminus K$, i.e. $u$ is the Green function with pole at $\infty$ for $\mathbb{CP}^1\setminus K$.
	
	Let $\lambda:[0,+\infty)\to\R$ be a non negative, convex, increasing function such that $\lambda(t)$ vanishes so quickly for $t\to0^+$ such that $\lambda\circ u$ extends to $0$ smoothly on $K$. Let
	$$\Omega_{K,c}=\{(z,w)\in\C^2\ :\ \lambda(u(z))+|w|^2<c\}\;,$$
	which will be, for almost all $c$, a smoothly bounded domain. We have that
	$$\de\debar \lambda\circ u(z)=\left|\frac{\partial u}{\partial z}\right|^2(\lambda''(u)+\lambda'(u))dz\wedge d\bar{z}$$
	so that the complex Hessian of the defining function of $\Omega_{K,c}$ is
	$$\left|\frac{\partial u}{\partial z}\right|^2\lambda''(u)dz\wedge d\bar{z}+dw\wedge d\bar{w}\;.$$
	A vector in $T^{1,0}b\Omega_{K,c}$ is of the form
	$$\bar{w}\partial_z - (\partial_z u)\lambda'(u)\partial_w$$
	hence the Levi form of $b\Omega_{K,c}$ is given by
	$$\left|\frac{\partial u}{\partial z}\right|^2(|w|^2\lambda''(u)+\lambda'(u))$$
	which will vanish if and only if $z\in K$ or $\partial_z u=0$.
	
	Points where $\partial_z u=0$ are of finite type; if $K$ is such that $T_zK=\C$ for all $z\in K$, then $S_{\core(\levinull)}\cong K\times\mathbb{S}^1$.
	
	\begin{ex}If $C\subseteq[0,1]$ is a suitably constructed Cantor set such that $K=C+iC$ has Hausdorff dimension $1$ but $\mathcal{H}^1$-measure $0$, then the Levi core of $b\Omega_{K,c}$ is supported in a set of vanishing $2$-dimensional Hausdorff measure (but of Hausdorff dimension $2$).\end{ex}
	
	We note that, for $(z,w)\in S_{\core(\levinull)}$, a tangent $(1,0)$-vector in $\levinull$ has the form
	$$\bar{w}\partial_z$$
	whereas a $(0,1)$-normal vector will be a multiple of
	$$\bar{w}\partial_w$$
	therefore, as the complex Hessian of the defining function is diagonal, we once again get that the D'Angelo class is trivial (as it contains the null $1$-form).

\end{document}